\newcommand{\kk}             {{\mathbb K}}
\newcommand{\tens}          {\otimes}
\newcommand{\into}           {\rightarrow}
\newcommand{\sumim}      {\sum\limits_{i=1}^m}
\newcommand{\fhi}             {\varphi}
\newcommand{\sig}            {\Sigma}
\newcommand{\lev}            {\left\langle}
\newcommand{\rev}            {\right\rangle}
\newcommand{\so}             {\Sigma\mbox{-ope\-ra\-tor}}
\newcommand{\sos}           {\Sigma\mbox{-ope\-ra\-tors}}
\newcommand{\xxx}              {X_1,\dots, X_n}
\newcommand{\eee}             {E_1,\dots, E_n}
\newcommand{\eel}              {E_1,\dots ,E_n,L}
\newcommand{\mmn}           {M_1,\dots ,M_n,N}
\newcommand{\xpx}             {X_1\times\dots\times X_n}
\newcommand{\epe}            {E_1\times\dots\times E_n}
\newcommand{\xxp}             {(x^1,\dots ,x^n)}
\newcommand{\zzp}              {(z^1,\dots ,z^n)}
\newcommand{\sxx}              {\sig_{X_1\dots X_n}}
\newcommand{\sxxp}            {\sig_{X_1 \dots X_n}^\pi}
\newcommand{\sxxb}            {\sig_{X_1 \dots X_n}^\beta}
\newcommand{\seee}           {\sig_{E_1\dots E_n}}
\newcommand{\xtx}      		   {X_1\tens\dots\tens X_n}
\newcommand{\ete}      		   {E_1\tens\dots\tens E_n}
\newcommand{\xtxty}    	  	   {X_1\tens\dots\tens X_n\tens Y}
\newcommand{\etetf}            {E_1\tens\dots \tens E_n\tens F}
\newcommand{\xtxtyd}   	   {X_1\tens\dots\tens X_n\tens Y^*}
\newcommand{\xtxp}            {X_1\widehat{\tens}_\pi\dots\widehat{\tens}_\pi X_n}
\newcommand{\xtxtyp}         {X_1\widehat{\tens}_\pi\dots\widehat{\tens}_\pi X_n\widehat{\tens}_\pi Y}
\newcommand{\pmq}            {p-q}
\newcommand{\xxt}              {x^1\tens\dots\tens  x^n}
\newcommand{\zzt}              {z^1\tens\dots\tens z^n}
\newcommand{\pmqy}          {(\pmq)\tens y}
\newcommand{\pimqi}          {p_i-q_i}
\newcommand{\ajmbj}          {a_j-b_j}
\newcommand{\aimbi}          {a_i-b_i}
\newcommand{\rju}               {\sumim(p_i-q_i)\tens y_i}
\newcommand{\Txxy}           {T:\xpx\into Y}
\newcommand{\tlin}             {\widetilde{T}}
\newcommand{\Lxx}             {\mathcal{L}\left(\xxx\right)}
\newcommand{\Lxxy}           {\mathcal{L}\left(\xxx,Y\right)}
\newcommand{\Gxxy}          {\Gamma(\xxx; Y)}
\newcommand{\Gxxyd}        {\Gamma(\xxx; Y^*)}
\newtheorem*{theokwapien}  {Kwapie\'n Type Characterization}
\newtheorem{definition}         {Definition}[section]
\newtheorem{proposition}      [definition]{Proposition}
\newtheorem{lemma}             [definition]{Lemma}
\newtheorem{corollary}          [definition]{Corollary}
\newtheorem{theorem}          [definition]{Theorem}
\begin{document}

\title{Multilinear Operators Factoring through Hilbert Spaces}
\author{Maite Fern\'{a}ndez-Unzueta, Samuel Garc\'{i}a-Hern\'{a}ndez}
\address{Centro de Investigaci\'{o}n en Matem\'{a}ticas (Cimat), A.P. 402 Guanajuato, Gto., M\'{e}xico}
\email{maite@cimat.mx; orcid:{0000-0002-8321-4877}} 
\email{samuelg@cimat.mx; orcid:{0000-0003-4562-299X}} 

\subjclass[2010]{47H60; 47L22; 46C05; 46M05; 46G25}




\keywords{Factoring through a Hilbert space, multilinear and polynomial mappings, Lipschitz mappings, tensor norm, Banach spaces}

\begin{abstract}
We characterize those bounded multilinear operators that factor through a Hilbert space in terms of its behavior in finite sequences. This extends a result, essentially due to S. Kwapie\'n, from the linear to the multilinear setting. We prove that Hilbert-Schmidt and Lipschitz $2$-summing multilinear operators naturally factor through a Hilbert space. It is also proved that the class $\Gamma$ of all multilinear operators that factor through a Hilbert space is a maximal multi-ideal; moreover, we give an explicit formulation of a finitely generated tensor norm $\gamma$ which is in duality with $\Gamma$.
\end{abstract}
\maketitle


\section{Introduction}

The fact  that a bounded linear operator between Banach spaces factors through a Hilbert space is,  a priori, a fairly  abstract property. It is possible,  however,  to describe it  in terms of the behaviour of the operator in a special type of finite sequences of the domain.  Such a local expression of the property makes it possible to relate it with other fundamental notions of the Geometry of Banach spaces.  This  is the case, for example,  of  Kwapie\'n's characterization of the  Banach spaces that are  isomorphic to a Hilbert space as those having type  2  and cotype 2 \cite{kwapien72a}. Regarding   the factorization of  linear operators  through a Hilbert space, we refer the reader to  the original  papers  \cite{Groth, LP},   and  to the corresponding chapters within the treatises \cite{DJT, pisier86, TJ}.

The problem of factoring an operator through a Hilbert space has also been studied  for other mappings than  linear operators. This is the case of Lispchitz mapings between metric spaces and completely bounded operators between operator spaces developed in \cite{chavez14} and \cite{pisier96}, respectively. This problem certainly makes sense for multilinear mappings.  Compact \cite{krikorian72}, nuclear \cite{alencar85}, $p$-summing \cite{angulo17, dimant03, pietsch83}, integral \cite{villanueva03} and other classes of linear operators have been extended to the multilinear setting. Despite this, the case of factoring a multilinear operator through a Hilbert space, as far as we know,  has not been studied nor even defined. In this paper we provide a solution to this problem. Now, we briefly describe our results:

We say that $\Txxy$ factors through a  Hilbert space if there exist a Hilbert space $H$, a subset $M$ of $H$, a bounded multilinear operator $A:\xpx\into H$ and a Lipschitz function $B:M\into Y$ such that $A(\xpx)\subset M$ and
\begin{equation}\label{multilineardiagram}
\begin{array}{c}
\xymatrix{
\xpx\ar[dr]_{A}\ar[rr]^-T   &                              &Y\\
                                              &M\ar[ur]_B\ar[d]_i  &\\
                                              &H                           &
}
\end{array}
\end{equation}
commutes, that is,  $T=B\circ A$. We define $\Gamma(T)=\inf \|A\|\,  Lip(B)$, where the infimum is taken over all possible factorizations as in \eqref{multilineardiagram}.

Our main result,  Theorem \ref{kwapien}, says the following:  If $\pi$ denotes the projective tensor norm on $\xtx$, then

\begin{theokwapien}
The multilinear operator $\Txxy$ factors through a Hilbert space if and only if there is a constant $C>0$ such that
\begin{equation*}
\sum\limits_{i=1}^{m}\|T(x_i^1,\dots, x_i^n)-T(z_i^1,\dots, z_i^n)\|^2  \leq  C^2 \sum\limits_{i=1}^{m}\pi(s_i^1\tens\dots\tens s_i^n-t_i^1\tens\dots\tens t_i^n)^2
\end{equation*}
holds for all finite sequences $(x_i^1,\dots, x_i^n)_{i=1}^m$, $(z_i^1, \dots, z_i^n)_{i=1}^m$, $(s_i^1, \dots, s_i^n)_{i=1}^m$ and $( t_i^1,\dots, t_i^n)_{i=1}^m$ in $\xpx$ with the property:
\begin{small}
\begin{equation*}
\sumim|\fhi(x_i^1,\dots, x_i^n)-\fhi(z_i^1,\dots, z_i^n)|^2  \leq \sumim |\fhi(s_i^1,\dots, s_i^n)-\fhi(t_i^1,\dots, t_i^n)|^2
\end{equation*}
\end{small}
for all $\fhi$ in $\Lxx$.
In this situation, $\Gamma(T)$ is the best constant $C$.
\end{theokwapien}

\

Note that in the case $n=1$,  Diagram \eqref{multilineardiagram} reduces to
\begin{equation*}
\begin{array}{c}
\xymatrix{
X\ar[dr]_{A}\ar[rr]^-T   &                              &Y\\
                                      &H\ar[ur]_B                           &
}
\end{array},
\end{equation*}
where all the involved operators are linear and $M=A(X)=H$.  In this case, we get an equivalent formulation of  the well known linear characterization (essentially due to Kwapien), namely, that a bounded linear operator $T:X\into Y$ factors through a Hilbert space if and only if there exists a constant $C>0$ such that
$$\sumim \|T(x_i)\|^2\leq C^2 \sumim \|a_i\|^2$$
holds for all finite sequences $(x_i)_{i=1}^m$ and $(a_i)_{i=1}^m$ in $X$ with the property:
$$\sumim |x^*(x_i)|^2\leq \sumim |x^*(a_i)|^2 \qquad\forall x^*\in X^*.$$

In this way, Theorem \ref{kwapien} extends the formulation of S. Kwapie\'n of linear operators factoring through a Hilbert space to the multilinear setting. The interested reader is deeply encouraged to see the early formulation of J. Lindenstrauss and A. Pelczynski of this property in \cite[Prop. 5.2]{LP} and the subsequent versions of S. Kwapie\'n \cite[Prop. 3.1]{kwapien72a} and \cite[Th . 2']{kwapien72}. Also see \cite[Th. 2.4]{pisier86} of G. Pisier for an accessible  proof and a good exposition of this class in the linear setting.

In relation with other multilinear properties, we prove that every Hilbert-Schmidt multilinear operator \cite[Definition 5.2]{matos03}, as well as  every Lipschitz $2$-summing multilinear operator \cite[Definition 3.1]{angulo17},  factors through a Hilbert space.

We also prove that the class of multilinear operators which satisfy diagram \eqref{multilineardiagram} enjoys ideal properties. To explain this, let $\mathcal{L}$ denote the class of all bounded multilinear operators. If we denote by $\Gamma$ the subclass of $\mathcal{L}$ that consists of all bounded multilinear operators that factor through a Hilbert space with the function $\Gamma(\cdot)$, then the pair $[\Gamma,\Gamma(\cdot)]$ is a maximal multi-ideal in the sense of Floret and Hunfeld \cite{floret_hunfeld01}. This affirmation is a consequence of Proposition \ref{idealbehavior} and Theorem \ref{maximal} which establishes the multi-ideal nature of $\Gamma$ and maximality, respectively.

In duality with the maximal multi-ideal nature of the class $\Gamma$, we exhibit a  finitely generated tensor norm $\gamma$ which satisfies that
$$(\xtxty,\gamma)^*=\Gxxyd$$
and
$$(\xtxtyd,\gamma)^*\cap\Lxxy=\Gxxy$$
hold isometrically. This results are presented as Theorem \ref{tensorialrepresentation} and Corollary \ref{tensorialrepresentation2}, respectively.

\

Following the ideas developed throughout the paper, we also  introduce the notion of polynomials that can be factored through a Hilbert space, see Definition \ref{polynomial}, and state a Kwapie\'n type characterization for polynomials (see Theorem \ref{kwapienpol}).

To obtain these results, we have  applied   the general approach introduced in \cite{fernandez-unzueta17a}.  This approach is, basically, to study a multilinear map $T$ by means of its associated $\Sigma$-operator $f_T$ (see Subsection \ref{subs: notation}). Posing the problem of factoring $T$ through a Hilbert space in the context of $\Sigma$-operators allowed us to use the geometrical richness of the tensor products of Banach spaces. Moreover,  since bounded $\Sigma$-operators are Lipschitz mappings {\cite[Theorem 3.2]{fernandez-unzueta17a}}, this  approach   enables us to relate, naturally, multilinear operators that factor through a Hilbert space with the metric  study carried  out  in  \cite{chavez14} (see Subsection \ref{subs: metric case}).

The material is organized as follows: In subsection 1.1 we fix some standard notation of Banach spaces and multilinear theories. We also recall from \cite{fernandez-unzueta17a} the notion of a $\so$. In section 2, we give the precise definition of a multilinear operator that  factors through a Hilbert space and present some examples. Section 3 is dedicated to prove the main result, that is, Theorem \ref{kwapien}. Section 4 is devoted to proving that the class $\Gamma$ of all multilinear operators that factor through a Hilbert space is a maximal multi-ideal. The duality with the tensor norm is proved  in  Theorem \ref{tensorialrepresentation} and Corollary \ref{tensorialrepresentation2}. In Section 5 we study the polynomials that factor through a Hilbert space, proving  a Kwapie\'n type characterization for them.

\subsection{Notation and Preliminaries}\label{subs: notation}

We use standard notation of the theory of Banach spaces. The letter $\kk$ denotes the real or complex numbers. The unit ball of a the normed space $X$ is denoted by $B_X$. We denote by $K_X:X\into X^{**}$ the canonical embedding.

Throughout this work, $n$ denotes a positive integer and the capital letters $\xxx,Y$ and $Z$ denote Banach spaces over the same field. The symbol $\Lxxy$ denotes the Banach space of bounded multilinear operators $\Txxy$ with the usual norm $\|T\|=\sup \{ \|T(x^1,\dots , x^n)\|  |  x^i\in B_{X_i}\}$. For simplicity of  notation, we write $\mathcal{L}(\xxx)$ in the case $Y=\kk$.

The set of decomposable tensors of the algebraic tensor product $\xtx$ is denoted by $\sxx$. This is,
$$\sxx:=\left\{\;  \xxt  \;|\;  x^i\in X_i  \;\right\}.$$
Let $\pi$ be the projective tensor norm given by
$$\pi(u;\xtx)= \inf \left\{\;  \sumim \|x_i^1\|\dots\|x_i^n\|  \;\Big|\; u=\sumim x_i^1\tens\dots\tens x_i^n \;\right\}.$$
The symbol $\sxxp$ denotes the resulting metric space obtained by restricting the norm $\pi$ to $\sxx$.

The universal property of the projective tensor product establishes that for every bounded multilinear operator $T:\xpx\into Y$ there exist a unique bounded linear operator $\tlin:\xtxp\into Y$ such that $T\xxp=\tlin(\xxt)$. In particular, the restriction of $\tlin$ to $\sxxp$ is a Lipschitz function. In this situation, the linear map $\tlin$ is called the linearization of $T$ and $f_T=\tlin|_{\sxxp}:\sxxp\into Y$ is called the $\so$ associated to $T$. Moreover, we have $\|T\|=Lip(f_T)=\|\tlin\|$ (for details on $\sos$ the reader may see  \cite{fernandez-unzueta17a}).

A norm $\beta$ on $\xtx$ is said to be a reasonable crossnorm if
$$\varepsilon(u)\leq\beta(u)\leq\pi(u)\qquad\forall\, u\in\xtx,$$
where $\varepsilon$ denote the injective tensor norm defined by
$$\varepsilon(u;\xtx)= \sup \left\{\;  |x_1^*\tens\dots\tens x_n^*(u)|  \;\Big|\; x_i^*\in B_{X_i^*},\; 1\leq i\leq n\;\right\}$$
for each $u$ in $\xtx$. The reader interested in tensor norms may check references \cite{defant93,ryan02,floret_hunfeld01}.

According to Theorem 2.1 of \cite{fernandez-unzueta17a} we have that if $\beta$ is a reasonable crossnorm on $\xtx$, then the resulting metric space $\sxxb$ (obtained by restricting the norm $\beta$ to $\sxx$) is Lipschitz equivalent to $\sxxp$. Specifically we have
\begin{equation}\label{metrics}
\pi(\pmq)\leq 2^{n-1}\, \beta(\pmq) \qquad\forall\,  p,q\in\sxx.
\end{equation}
for all reasonable crossnorm $\beta$ on $\xtx$.


\section{Definition, Examples and the Metric Case}

\begin{definition}
We say that the multilinear operator $T:\xpx\into Y$ factors through a Hilbert space if there exists a Hilbert space $H$, a subset $M$ of $H$, a bounded multilinear operator $A:\xpx\into H$ whose image is contained in $M$, and a Lipschitz function $B:M\into Y$ such that the diagram
\begin{equation}\label{factorization}
\begin{array}{c}
\xymatrix{
\xpx\ar[dr]_{A}\ar[rr]^-T   &                              &Y\\
                                              &M\ar[ur]_B\ar[d]  &\\
                                              &H                           &
}
\end{array}
\end{equation}
commutes. We define $\Gamma(T)$ as $\inf \|A\|\, Lip(B)$ where the infimum is taken over all possible factorizations as above.
\end{definition}

In the previous definition it is enough to take $M$ as $A(\xpx)$ (or equivalently, its closure in $H$).

The collection of multilinear operators $\Txxy$ which admit a factorization through a Hilbert space as in \eqref{factorization} is denoted by the symbol $\Gamma\left(\xxx; Y\right)$. The symbol $\Gamma$ denotes the class of all bounded multilinear operators that factors through a Hilbert space.

It is easy to see that the translation of the diagram \eqref{factorization} to the setting of $\sos$ acquires the form
\begin{equation}\label{sigmafactorization}
\begin{array}{c}
\xymatrix{
\sxxp\ar[dr]_{f_A}\ar[rr]^-{f_T}   &                              &Y\\
                                              &f_A(\sxx)\ar[ur]_B\ar[d]  &\\
                                              &H                           &
}
\end{array},
\end{equation}
where $f_T$ and $f_A$ are the $\sos$ associated to the bounded multilinear operators $T$ and $A$. In other words, $f_T=Bf_A$.

\subsection{Examples} In the sequel, $H_1\dots H_n$ and $H$ denote Hilbert spaces, $H_1\widehat{\tens}_2\dots\widehat{\tens}_2 H_n$ denotes its Hilbert tensor product and $\|\cdot\|_2$ denotes its reasonable crossnorm (we refer the reader to \cite[Sec. 2.6]{kadison83} for the details of this construction).

\subsection*{The Canonical Multilinear Map on Hilbert spaces}

The canonical multilinear map
\begin{eqnarray*}
\tens:H_1\times\dots\times H_n &\into &		  H_1\widehat{\tens}_\pi\dots\widehat{\tens}_\pi H_n\\
	\xxp					&\mapsto &  \xxt
\end{eqnarray*}
factors through the Hilbert space $H_1\widehat{\tens}_2\dots\widehat{\tens}_2 H_n$. Moreover \eqref{metrics} implies that
$$\Gamma(\tens)\leq \|\tens\|\, Lip(Id:\sig_{H_1\dots H_n}^{\|\cdot\|_2}\into \sig_{H_1\dots H_n}^{\pi})\leq 2^{n-1}.$$

Notice that the linearization $\tilde{\tens}$ coincides with the identity map on $H_1\widehat{\tens}_\pi\dots\widehat{\tens}_\pi H_n$. Hence, $\widehat{\tens}$ does not factor through a Hilbert space in the linear sense since for $n>1$, $H_1\widehat{\tens}_\pi\dots\widehat{\tens}_\pi H_n$   contains a subspace isometric to $\ell_1$ (see \cite[Ex. 2.10]{ryan02}).

As a consequence of the previous discussion, $T\in\Gxxy$ does not imply that its linearization $\tlin:\xtxp\into Y$ factors through a Hilbert space. However, the converse is naturally true.

\subsection*{Multilinear Operators Whose Linearization Factor through a Hilbert Space}

Consider a bounded multilinear operator $\Txxy$ whose linearization $\tlin:\xtxp\into Y$ factors through a Hilbert space. Notice that a typical factorization $\tlin=BA$ implies that $T$ factors as $T=B|_{f_A(\sxx)} (A\tens)$. Therefore, $T\in\Gxxy$ and $\Gamma(T)\leq \Gamma(\tlin)$. In other words, the operator
\begin{eqnarray*}
\Gamma(\xtxp;Y)  &\into &  \Gxxy\\
                   \tlin      &\mapsto &  T
\end{eqnarray*}
is bounded and has norm $\leq 1$.

\subsection*{When every Factor  of the Domain is a Hilbert Space}

It is natural to expect that operators of the form $T:H_1\times\dots\times H_n\into Y$ factor through the Hilbert space $H_1\widehat{\tens}_2\dots\widehat{\tens}_2 H_n$. Indeed, the identity
\begin{eqnarray*}
\Gamma(H_1, \dots , H_n;Y)  &\into &  \mathcal{L}(H_1, \dots, H_n; Y)\\
                   T      &\mapsto &  T
\end{eqnarray*}
is an onto isomorphism for every Banach space $Y$. To see this, let $T:H_1\times\dots\times H_n\into Y$ be a bounded multilinear operator. Recall that $f_T:\sig_{H_1\dots H_n}^{\pi}\into Y$ is a Lipschitz function and $\tens:H_1\times\dots\times H_n\into H_1\widehat{\tens}_2\dots\widehat{\tens}_2 H_n$ is bounded. From \eqref{metrics} we have that $f_T:\sig_{H_1\dots H_n}^{\|\cdot\|_2}\into Y$ is also Lipschitz. Hence, the factorization $T=f_T\tens$ tells us that $T\in\Gamma(H_1, \dots , H_n;Y)$ and
$$\Gamma(T)\leq \|\tens\|\, Lip(f_T:\sig_{H_1\dots H_n}^{\|\cdot\|_2}\into Y)\leq 2^{n-1}\|T\|.$$
Furthermore, (iii) of Proposition \ref{idealbehavior} says $\|T\|\leq \Gamma(T)$.

\subsection*{Hilbert-Schmidt Multilinear Operators}

Following Matos \cite{matos03}, let $\mathcal{L}_{HS}(H_1,\dots,H_n;H)$ denote the Banach space of Hilbert-Schmidt multilinear operators $T:H_1\times\dots\times H_n\into H$ endowed with the norm
$$\|T\|_{HS}=\left(\sum\limits_{ \substack{ j_i\in J_i \\  1\leq i\leq n} } \|T(e_{j_1}^1,\dots, e_{j_n}^n))\|^2\right)^{\frac{1}{2}},$$
where $(e_{j}^i)_{j\in J_i}$ is an orthonormal basis of $H_i$.

The previous example tells us that every Hilbert-Schmidt multilinear operator factors through a Hilbert space. Even more:

\begin{proposition}\label{HS} $\|Id:\mathcal{L}_{HS}(H_1,\dots,H_n;H)\into \Gamma(H_1,\dots,H_n;H) \|\leq 1.$
\end{proposition}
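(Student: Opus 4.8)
The plan is to factor $T$ through its linearization on the Hilbert tensor product. Write $\mathcal{H}=H_1\widehat{\tens}_2\dots\widehat{\tens}_2 H_n$ and let $\tens\colon H_1\times\dots\times H_n\into\mathcal{H}$ be the canonical multilinear map, which satisfies $\|\tens\|=1$ since $\|\xxt\|_2=\|x^1\|\cdots\|x^n\|$. The decisive step is to show that, although the linearization of $T$ lives a priori only on the projective tensor product, the Hilbert--Schmidt hypothesis forces the rule $\xxt\mapsto T\xxp$ to extend to a \emph{bounded} linear operator $S\colon\mathcal{H}\into H$.

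To carry this out, I would fix orthonormal bases $(e_j^i)_{j\in J_i}$ of each $H_i$, so that $\{e_{j_1}^1\tens\dots\tens e_{j_n}^n\}$ is an orthonormal basis of $\mathcal{H}$. I define $S$ on this basis by $S(e_{j_1}^1\tens\dots\tens e_{j_n}^n)=T(e_{j_1}^1,\dots,e_{j_n}^n)$ and extend linearly; a Cauchy--Schwarz estimate on finite combinations gives $\|Su\|\leq\|u\|_2\,\|T\|_{HS}$, so that $S$ is Hilbert--Schmidt with $\|S\|_{HS}=\|T\|_{HS}$ and, in particular, $\|S\|\leq\|T\|_{HS}$. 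Expanding a decomposable tensor $\xxt$ in this basis and invoking the boundedness (hence continuity) of $T$ then yields $S(\xxt)=T\xxp$; that is, $T=S\circ\tens$.

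With the factorization $T=S\circ\tens$ in hand, I would set $M=\tens(H_1\times\dots\times H_n)\subseteq\mathcal{H}$ and take $B=S|_M$. Being the restriction of the linear operator $S$, the map $B$ is Lipschitz with $Lip(B)\leq\|S\|\leq\|T\|_{HS}$, while $\|\tens\|=1$. Hence
\[
\Gamma(T)\leq\|\tens\|\,Lip(B)\leq\|T\|_{HS},
\]
which is exactly the asserted norm bound for the inclusion $Id$.

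The main obstacle is the extension step: one must verify that the Hilbert--Schmidt summability condition is precisely what upgrades the $\so$ $f_T$ (which \eqref{metrics} only controls up to the factor $2^{n-1}\|T\|$) to a genuinely bounded linear map on the Hilbert tensor product, and that this extension still agrees with $T$ on decomposable elements. Once the orthonormal-basis expansion is justified by the convergence coming from $\|T\|_{HS}<\infty$ together with the continuity of $T$, the remaining estimates are immediate.
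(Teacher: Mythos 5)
Your proof is correct and takes essentially the same route as the paper: both factor $T=S\circ\tens$ through the Hilbert tensor product $H_1\widehat{\tens}_2\dots\widehat{\tens}_2 H_n$, with $\|\tens\|=1$ and the Lipschitz factor being the restriction of the bounded linearization to the set of decomposable tensors, yielding $\Gamma(T)\leq\|T\|_{HS}$. The only difference is that where the paper simply cites \cite[Prop. 5.10]{matos03} for the isometric extension $T\mapsto\tlin$ to a Hilbert--Schmidt operator on the Hilbert tensor product, you prove that extension inline via the orthonormal-basis and Cauchy--Schwarz argument, which is exactly the content of the cited result.
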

\begin{proof}
Recall that $\mathcal{L}_{HS}(H_1,\dots,H_n;H)$ is isometrically isomorphic to $\mathcal{L}_{HS}(H_1\widehat{\tens}_2\dots\widehat{\tens}_2 H_n;H)$ via the assignment $T\mapsto \tlin$, see \cite[Prop. 5.10]{matos03}. Then, every Hilbert-Schmidt multilinear operator $T:H_1\times\dots\times H_n\into H$ factors as $T=f_T\tens$ through $H_1\widehat{\tens}_2\dots\widehat{\tens}_2 H_n$. Moreover,
$$\|f_T(p)-f_T(q)\|\leq\|\pmq\|_2\,  \|T\|_{HS}\qquad\forall  p,q\in\sig_{H_1,\dots,H_n}.$$
Hence $\Gamma(T)\leq \|\tens\|\,  Lip(f_T:\sig_{H_1\dots H_n}^{\|\cdot\|_2}\into H  )\leq \|T\|_{HS}$.
\end{proof}

With this, we obtain   that for every $2\leq p < \infty$,  every fully absolutely
$p$-summing operator  $T\in \mathcal{L}_{fas}^p(H_1,\dots,H_n;H)$  (for this notion, see \cite[Def. 2.2]{matos03}) factors  through a Hilbert space. In Proposition 5.5 of this  reference,  the author proves that $\mathcal{L}_{fas}^2(H_1,\dots,H_n;H)$ is isometrically isomorphic to $\mathcal{L}_{HS}(H_1,\dots,H_n;H)$. Hence, the  previous proposition asserts that every absolutely $2$-summing multilinear operator $T$ between Hilbert spaces factors through a Hilbert space and $\Gamma(T)\leq \|T\|_{fas,2} $. Even more, according to \cite[Prop. 5.7]{matos03}, $\mathcal{L}_{fas}^p(H_1,\dots,H_n;H)$ is isomorphic to $\mathcal{L}_{HS}(H_1,\dots,H_n;H)$ for $2\leq p<\infty$. As a consequence, every fully absolutely $p$-summing  multilinear operator $T$ factors through a Hilbert space and $\Gamma(T)\leq (b_p)^n \|T\|_{fas,p}$, where $b_p$ is the greater constant from Khintchine's inequality for all $2\leq p<\infty$.

Notice that the morphism in Proposition \ref{HS} is not onto since $\tens:H_1\times\dots\times H_n \into H_1\widehat{\tens}_2\dots\widehat{\tens}_2 H_n$ is a bounded multilinear operator which is not Hilbert-Schmidt when each $H_i$ is infinite dimensional. The same observation is also valid for the case of fully absolutely summing multilinear operators we deal with before.

\subsection*{Lipschitz $2$-Summing Multilinear Operators}

In this example we relate the notion of Lipschitz $2$-summing multilinear operators developed in \cite{angulo17} with multilinear operators that factor through a Hilbert space.

One of the equivalences of the Lipschitz $2$-summability of $\Txxy$ (see, \cite[Th. 1.1]{angulo17}) establishes that $T$ factors as
\begin{equation*}
\begin{array}{c}
\xymatrix{
\xpx\ar[d]_{i}\ar[r]^-T                                &          Y                    \\
i(\sxx)\ar[d]\ar[r]_{j_2|_{i(\sxx)}}             &j_2\circ i (\sxx)\ar[d]\ar[u]_u \\
C(B_{\Lxx^*})\ar[r]_{j_2}                           & L_2(\mu)
}
\end{array},
\end{equation*}
where $\mu$ is a probability measure on $(B_{\Lxx^*},w^*)$, $i:\xpx\into C(B_{\Lxx^*})$ acts by evaluation, $j_2:C(B_{\Lxx^*})\into L_2(\mu) $ is the canonical inclusion and $u$ is a Lipschitz function such that $\pi_2^{Lip}(T)= Lip(u)$. Hence $T\in\Gxxy$ and $\Gamma(T)\leq\pi_2^{Lip}(T)$.

Actually, we have proved that
$$\|Id:\Pi_2^{Lip}(\xxx;Y)\into\Gxxy\|\leq 1$$
holds for all Banach spaces $\xxx$ and $Y$.

\subsection{Relation with the Metric Case}\label{subs: metric case}

Now, we turn our attention to  Lipschitz mappings between metric spaces. Recall from \cite{chavez14} that a Lipschitz function between metric spaces $f:X\into Y$ factors through a subset of a Hilbert space if there exist a Hilbert space $H$ and a subset $Z$ of $H$ (actually we may take $Z=f(X)$) and two Lipschitz functions $A:X\into Z$, $B:Z\into Y$ such that $f=BA$. In this case $\gamma_2^{Lip}(f)=\inf Lip(A)\,  Lip(B)$, where the infimum is taken over all possible factorizations of $f$ as before.

It is clear from Diagram \eqref{sigmafactorization} that if $T$ is an element in $\Gxxy$, then its associated $\so$ $f_T:\sxxp\into Y$ is a Lipschitz function that can be factored through a subset of a Hilbert space in the sense of \cite{chavez14}. Moreover, we have
$$\gamma_2^{Lip}(f_T)\leq \Gamma(T).$$
In other words, every multilinear operator $T$ in $\Gxxy$ gives rise to a Lipschitz function $f_T$ in $\Gamma_2^{Lip}(\sxxp; Y)$. That is, the operator
\begin{eqnarray}\label{metriccomparison}
\Gxxy &\into & \Gamma_2^{Lip}(\sxxp; Y)  \\
      T  &\mapsto &  f_T\nonumber
\end{eqnarray}
is bounded and has norm $\leq 1$.

We do not know if the metric approach of \cite{chavez14} restrict well to the setting of multilinear operators we are proposing. Specifically, we have two questions:

\textbf{Question 1:} Is the map defined in \eqref{metriccomparison} an isometry?

\textbf{Question 2:} We do not know if $T$ factors through a Hilbert space whenever $f_T$ does in the metric sense, that is, Does $f_T$ in $\Gamma_2^{Lip}(X;Y)$ imply $T$ in $\Gxxy$?


\section{Kwapie\'n Type Characterization}

In this section we  characterize the multilinear operators that factor through a Hilbert space,  in terms of their behavior on some special  finite sequences (Theorem \ref{kwapien}). This fact relies on the local character of the property of factoring through  a Hilbert space, which is proved in  Theorem \ref{maximal}.   First, we precise some needed facts and notation.

Sets of the form $f_A(\sxx)=A(\xpx)$, where $A:\xpx\into Z$ is bounded, are fundamental for the proof of Theorem \ref{maximal}. We collect some relevant facts about these sets in the next lemma. We omit its proof since it can be done by standard arguments of the theory of Banach spaces.
\begin{lemma}\label{sigmaimages}
Let $A:\xpx\into Z$ be a bounded multilinear operator between Banach spaces. Then:
\begin{itemize}
\item [i)] The set $(f_A(\sxx))^*$ defined by
$$\left\{ \psi:f_A(\sxx)\into \kk \;\Big|\; \psi A \mbox{ is multilinear and } \psi\mbox{ is Lipschitz} \right\}$$
is a vector space endowed with the algebraic operations defined pointwise; moreover, it becomes a Banach space with the Lipschitz norm induced by $Z$.
\item [ii)] Let $B:f_A(\sxx)\into Y$ be a Lipschitz function with $BA:\xpx\into Y$ multilinear. The function
\begin{eqnarray}\label{adjoint}
B^*:Y^* &\into& (f_A(\sxx))^*\\
y^*&\mapsto& y^*B.\nonumber
\end{eqnarray}
is a well defined bounded linear operator and $\|B^*\|\leq Lip(B)$. The linear operator $B^*$ is called the adjoint of $B$.
\end{itemize}
\end{lemma}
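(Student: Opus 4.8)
The plan is to treat the two parts separately; part (ii) is formal, so the content lies in part (i), and even there the argument is the classical one for spaces of Lipschitz functions vanishing at a base point. The organizing observation, which I would record first, is that every admissible $\psi$ satisfies $\psi(0)=0$: indeed $0 = A(0, x^2,\dots,x^n)$ belongs to $f_A(\sxx) = A(\xpx)$, and since $\psi A$ is multilinear it vanishes whenever one entry is $0$, giving $\psi(0) = \psi A(0,x^2,\dots,x^n) = 0$. This is precisely what makes the Lipschitz seminorm a genuine norm and what lets one control the values of $\psi$ by the norm of its argument.

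For the linear structure in (i) I would verify closure directly: sums and scalar multiples of Lipschitz functions are Lipschitz, and $(\psi_1+\psi_2)A = \psi_1A + \psi_2A$ and $(\lambda\psi_1)A = \lambda(\psi_1A)$ remain multilinear, so the set is a subspace of the Lipschitz functions on $f_A(\sxx)$. That $Lip(\cdot)$ is a norm rather than a seminorm follows from the base-point remark: $Lip(\psi)=0$ forces $\psi$ constant, whence $\psi\equiv\psi(0)=0$. The substance of (i) is completeness. Given a Cauchy sequence $(\psi_k)$, the estimate
$$|\psi_j(p)-\psi_k(p)| = |(\psi_j-\psi_k)(p)-(\psi_j-\psi_k)(0)| \leq Lip(\psi_j-\psi_k)\,\|p\|_Z$$
shows $(\psi_k(p))_k$ is Cauchy in $\kk$ for each $p$, so I set $\psi(p):=\lim_k\psi_k(p)$. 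Letting $j\to\infty$ in $|(\psi_j-\psi_k)(p)-(\psi_j-\psi_k)(q)|\leq Lip(\psi_j-\psi_k)\,\|\pmq\|_Z$ simultaneously yields that $\psi$ is Lipschitz (Cauchy sequences being norm-bounded) and that $Lip(\psi-\psi_k)\to 0$, i.e. convergence in the intended norm. \textbf{The one step needing attention} is that $\psi A$ is again multilinear; this is the only place where the limit interacts with the defining constraint, but it causes no trouble because multilinearity is a pointwise family of algebraic identities preserved under the pointwise convergence $\psi_kA\to\psi A$. Hence $\psi\in(f_A(\sxx))^*$ and the space is complete.

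Part (ii) is then immediate. For $y^*\in Y^*$ the composite $y^*B$ is Lipschitz, since
$$|y^*B(p)-y^*B(q)| \leq \|y^*\|\,\|B(p)-B(q)\| \leq \|y^*\|\,Lip(B)\,\|\pmq\|_Z,$$
so $Lip(y^*B)\leq\|y^*\|\,Lip(B)$; moreover $(y^*B)A = y^*(BA)$ is multilinear as a linear functional composed with the multilinear map $BA$. Thus $B^*y^* = y^*B$ lies in $(f_A(\sxx))^*$, the map $B^*$ is linear by the pointwise definition of the operations, and taking the supremum over $\|y^*\|\leq 1$ in the displayed estimate gives $\|B^*\|\leq Lip(B)$. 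I anticipate no real obstacle beyond the completeness verification, which reduces entirely to the textbook proof once the preservation of multilinearity under pointwise limits is noted.
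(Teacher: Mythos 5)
Your proof is correct, and it matches the paper exactly in spirit: the paper omits the proof of this lemma, saying only that it "can be done by standard arguments of the theory of Banach spaces," and your argument is precisely that standard route --- the base-point observation $\psi(0)=0$ obtained from multilinearity of $\psi A$ (which turns the Lipschitz seminorm into a norm and gives the pointwise Cauchy estimate), the classical completeness argument for Lipschitz functions vanishing at a base point, the remark that multilinearity of $\psi A$ survives pointwise limits, and the routine composition estimates for part (ii). Nothing is missing; in particular you correctly flagged and discharged the only non-boilerplate step, namely that the defining constraint ``$\psi A$ multilinear'' is preserved in the limit.
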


Let $E_i$ be a finite dimensional subspace of $X_i$, for $1\leq i\leq n$, and $L$ be a finite codimensional subspace of $Y$. We define the multilinear map
\begin{eqnarray*}
I_{\eee}:\epe  &\into &          \xtxp\\
             \xxp  &\mapsto &  \xxt
\end{eqnarray*}
and denote by $Q_L:Y\into Y/L$ the natural quotient map.

\begin{theorem}\label{maximal}
The multilinear operator $T:\xpx\into Y$ admits a factorization through a Hilbert space if and only if
$$s:=\sup \Gamma(Q_L f_T I_{\eee}) <\infty,$$
where the supremum is taken over all finite dimensional subspaces $E_i$ of $X_i$ and finite codimensional subspaces $L$ of $Y$. In this situation $\Gamma(T)=s$.
\end{theorem}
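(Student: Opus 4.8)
The plan is to establish the two inequalities $\Gamma(T)\le s$ and $s\le\Gamma(T)$ separately, where the second is the easy direction and the first carries the main content. For the \emph{easy direction} $s\le\Gamma(T)$: suppose $T$ factors as $T=Bf_A$ through a Hilbert space $H$ with $\|A\|\,Lip(B)$ close to $\Gamma(T)$. For each finite-dimensional $E_i\subset X_i$ and finite-codimensional $L\subset Y$, I would observe that $Q_Lf_TI_{\eee}$ factors as $(Q_LB)\circ(f_A I_{\eee})$. The composition $A\circ I_{\eee}$ (restricting the domain) has norm $\le\|A\|$, and $Q_L B$ is Lipschitz with $Lip(Q_LB)\le Lip(B)$ since $Q_L$ is a contraction. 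Hence $\Gamma(Q_Lf_TI_{\eee})\le\|A\|\,Lip(B)$, and taking the supremum over all $(\eel)$ followed by the infimum over factorizations gives $s\le\Gamma(T)$; in particular $s<\infty$ when $T\in\Gamma$.

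The \emph{hard direction} is to produce a single global factorization of $T$ through one Hilbert space from the uniform bound $s<\infty$ on all the finite-dimensional/finite-codimensional pieces. The natural approach is an \textbf{ultraproduct} (or weak-$*$ compactness) argument over the directed set of pairs $(\eel)$. For each such pair the finite piece $Q_Lf_TI_{\eee}$ factors through some Hilbert space $H_{(\eel)}$ with control $\|A_{(\eel)}\|\,Lip(B_{(\eel)})\le s+\epsilon$; one then forms the Banach-space ultraproduct $H=\bigl(\prod H_{(\eel)}\bigr)_{\mathcal U}$ along an ultrafilter $\mathcal U$ refining the order filter on the index set. The key structural input is that \emph{an ultraproduct of Hilbert spaces is again a Hilbert space} (the parallelogram law passes to the ultraproduct), so $H$ is Hilbertian. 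I would assemble a multilinear map $A:\xpx\to H$ by sending $\xxp$ to the class of $(A_{(\eel)}(\xxp))$ — well-defined once $E_i\supset\{x^i\}$ and using $\|A_{(\eel)}\|\le s+\epsilon$ to keep the net bounded — and a Lipschitz $B$ on $A(\xpx)$ recovering $T$ in the limit via $T=Bf_A$, with $\|A\|\,Lip(B)\le s+\epsilon$. Letting $\epsilon\to0$ yields $T\in\Gamma$ and $\Gamma(T)\le s$.

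The adjoint machinery of Lemma \ref{sigmaimages} is what makes the limiting construction rigorous: rather than chasing the geometric maps $A_{(\eel)}$ and $B_{(\eel)}$ directly, I expect the cleanest route is to pass to adjoints $B_{(\eel)}^*:Y^*\to(f_{A_{(\eel)}}(\sig))^*$, whose norms are controlled by $Lip(B_{(\eel)})$, and to use the Hilbertian structure to realize each finite piece through an inner-product estimate before taking the ultralimit. This reduces the construction of the global $B$ to a weak-$*$ limit of uniformly bounded linear data, where Lemma \ref{sigmaimages}(ii) guarantees the limit is again of the required Lipschitz-with-multilinear-composition type.

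The step I expect to be the \textbf{main obstacle} is verifying that the ultralimit map $B$ is genuinely well-defined and Lipschitz on the whole image $A(\xpx)$ — not merely on each finite piece — and that the factorization identity $T=Bf_A$ holds \emph{exactly} rather than only asymptotically. The difficulty is that the domains $f_{A_{(\eel)}}(\sig)$ vary with the index, so compatibility of the local Lipschitz functions $B_{(\eel)}$ across different pairs $(\eel)$ must be forced by the ultrafilter limit; controlling $Lip(B)$ by $s$ requires that the local Lipschitz estimates survive the passage to the ultraproduct metric, which is exactly where the Lipschitz equivalence \eqref{metrics} between $\pi$ and other reasonable crossnorms, together with the norm bound $\|B_{(\eel)}^*\|\le Lip(B_{(\eel)})$ from Lemma \ref{sigmaimages}, must be invoked carefully.
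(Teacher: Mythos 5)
Your proposal is correct and follows essentially the same route as the paper: the identical easy direction via $Q_Lf_TI_{\eee}=(Q_LB)(f_AI_{\eee})$, and for the converse the same ultraproduct construction over the directed set of tuples $(\eel)$ with an ultrafilter refining the order filter, with $A$ assembled by extending the local maps by zero off $\epe$ and $B$ obtained as a weak-$*$ ultralimit in $Y^{**}$ through the adjoints $(B^{\eee, L})^*$ of Lemma \ref{sigmaimages}(ii), exactly as in the published argument. The only minor divergences are that you certify Hilbertianity of the ultraproduct directly via the parallelogram law, whereas the paper first reduces each local Hilbert space to some $\ell_2^{n(\eel)}$ and then invokes the Lindenstrauss--Tzafriri representation of abstract $L_2$-spaces (your route is, if anything, more economical), and that the equivalence \eqref{metrics} you anticipate needing plays no role in the paper's proof of this theorem, since the Lipschitz control of the limit map comes solely from $\|(B^{\eee, L})^*\|\le Lip(B^{\eee, L})\le s$.
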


\begin{proof}
Suppose that $T:\xpx\into Y$ factors through a Hilbert space. Let $E_i$ and $L$ be as above. The factorization $T=BA$ implies $Q_L f_T I_{\eee}=(Q_LB)(f_A I_{\eee})$ and
$$\Gamma(Q_L f_T I_{\eee})\leq \Gamma(T).$$
Therefore, $s$ must be finite.

For the converse, we have to translate a condition on finite dimensional spaces to a global condition. For this end we use the technique of ultraproducts. Basic facts about ultraproducts of Banach spaces can be found in \cite{heinrich80}.

Let us denote by $\mathcal{F}(X)$ the collection of all finite dimensional subspaces of $X$ and by $\mathcal{CF}(Y)$ the collection of all finite codimensional subspaces of $Y$.

Define $\mathcal{P}=\mathcal{F}(X_1)\times\dots\times \mathcal{F}(X_n)\times\mathcal{CF}(Y)$. The relation $\leq$ given by $(\eel)\leq(\mmn)$ if $E_i\subset M_i$ and $N\subset L$ defines a partial order on $\mathcal{P}$. Let $\mathfrak{A}$ be an ultrafilter on $\mathcal{P}$ containing the sets
$$(\eel)^\#=\{\;  (\mmn)  \;|\; (\eel)\leq(\mmn) \;\}.$$

For each $(\eel)\in\mathcal{P}$ there exist a factorization as follows
$$\begin{array}{c}
\xymatrix{
\epe\ar[dr]_{{A^{\eee L}}}\ar[rr]^-{Q_L f_T I_{\eee}}  &                              &Y/L\\
                                              &A^{\eee L}\left(\epe\right)\ar[ur]_{B^{\eee L}}\ar[d]  &\\
                                              &H^{\eee L}                           &
}
\end{array},$$
with $\|A^{\eee, L}\| \leq 1$ and $Lip(B^{\eee, L})\leq s$. By the finite dimensional hypothesis, we may assume that $H^{\eee, L}=\ell_2^{n(\eel)}$, where $n(\eel)$ is a positive integer.

For each $(\eel)\in\mathcal{P}$ define
\begin{eqnarray*}
A_{\eee, L}:\xpx  &\into &       \ell_2^{n(\eel)}\\
                \xxp      &\mapsto &  \left\{\begin{array}{ccl}
                                                                    A^{\eee, L}\xxp & \mbox{if}\;\; \xxp\in\epe\\
                                                                            0            &  otherwise
                                                   \end{array}\right. .
\end{eqnarray*}
It is not difficult to see that
\begin{eqnarray*}
A:\xpx  &\into &\left(\ell_2^{n(\eel)}\right)_{\mathfrak{A}}\\
	\xxp  &\mapsto &  \left(A_{\eee, L}\xxp\right)_{\mathfrak{A}}
\end{eqnarray*}
is a multilinear mapping. Moreover,
\begin{eqnarray*}
\|A\xxp\|_{\mathfrak{A}} &=& \|(A_{\eee, L}\xxp)_\mathfrak{A}\|_{\mathfrak{A}} \\
                                &=&    \lim\limits_{\mathfrak{A}} \|A_{\eee, L}\xxp\| \\
                          &\leq  &    \|x^1\|\dots\|x^n\|
\end{eqnarray*}
implies that $A$ is a bounded and $\|A\|\leq 1$.

On the other hand, we extend the operator $(B^{\eee, L})^*:(Y/L)^*\into (f_{A^{\eee, L}}(\seee))^*$  (see Lemma \ref{sigmaimages}) as follows
\begin{eqnarray*}
\overline{(B^{\eee, L})^*} : Y^*  &\into & \left(f_{A^{\eee, L}}(\seee)\right)^*\\
                                y^*         &\mapsto &  \left\{\begin{array}{ccl}
                                               \left(B^{\eee, L}\right)^*(\zeta) &&  \mbox{if}\;\; y^*=Q_L^*(\zeta)\in Q_L^*((Y/L)^*)\\
                                                                            0  & &  otherwise
\end{array}
\right..
\end{eqnarray*}
Define
\begin{eqnarray*}
B:A(\xpx) &\into &       Y^{**}\\
      A\xxp     &\mapsto &  BA\xxp,
\end{eqnarray*}
where
\begin{eqnarray*}
BA\xxp:Y^*  &\into &\kk\\
y^*  &\mapsto &  \lim\limits_{\mathfrak{A}} \lev  \overline{(B^{\eee, L})^*}(y^*)  \,,\, A_{\eee, L}\xxp \rev.
\end{eqnarray*}

The definitions of $\overline{(B^{\eee, L})^*}$ and $A_{\eee, L}$ imply that
\begin{eqnarray}\label{filter}
\Big|\lev \overline{(B^{\eee, L})^*}(y^*) ,  A_{\eee, L}\xxp \rev -\lev \overline{(B^{\eee, L})^*}(y^*)  ,  A_{\eee, L}\zzp \rev \Big|& \nonumber\\
    \leq    s\,  \|y^*\|\,  \|A_{\eee, L}\xxp-A_{\eee, L}\zzp\|&
\end{eqnarray}
holds for all $y^*$ in $Y^*$, $\xxp$, $\zzp$ in $\xpx$ and $(\eel)$ in $\mathcal{P}$.

Inequality \eqref{filter} has many implications. First, $\zzt=0$ implies that $BA\xxp$ is well defined. Second, $(A^{\eee, L}\xxp)_{\mathfrak{A}}=(A^{\eee, L}\zzp)_{\mathfrak{A}}$ asserts that $B$ does not depend on representations since
$$\lim\limits_{\mathfrak{A}} \|A^{\eee, L}\xxp-A^{\eee, L}\zzp\|=0.$$
Third, the general case ensures that $B$ is Lipschitz and $Lip(B)\leq s$.

To conclude, note that for every $\xxp\in\xpx$ and $y^*\in Y^*$ there exists $(\eel)$ in $\mathcal{P}$ such that $\xxp\in\epe$ and $y^*\in Q_L^*((Y/L)^*)$. Then $(\eel)^\#\in\mathfrak{A}$ ensures that
$$\lim\limits_{\mathfrak{A}} \lev  \overline{(B^{\eee, L})^*}(y^*)  \,,\, A_{\eee, L}\xxp \rev=y^*(T\xxp).$$
As a consequence, $BA\xxp=K_Y T\xxp$ for all $\xxp$ in $\xpx$. This means that
$$\begin{array}{c}
\xymatrix{
\xpx\ar[dr]_{A}\ar[rr]^-T  &                                            &Y\\
                                          &A\left(\xpx\right)\ar[ur]_-{K_Y^{-1}B}\ar[d]  &\\
                                          &\left(\ell_2^{n(\eel)}\right)_{\mathfrak{A}}                           &
}
\end{array}$$
is commutative. Now, if we consider all the spaces $\ell_2^{n(\eel)}$ as abstract $L_2$-spaces, then the ultraproduct $\left(\ell_2^{n(\eel)}\right)_{\mathfrak{A}}$ is an abstract $L_2$-space. Moreover, \cite[Vol. II Th. 1.b.2]{lindenstrauss_tzafriri96} implies that this ultraproduct is (order) linearly isometric to $L_2(\mu)$ for some measure space $(\Omega,\mu)$. This means that $T$ factors through a Hilbert space and $\Gamma(T)\leq s$.
\end{proof}

\

Given finite sequences $(p_i)_{i=1}^m$, $(q_i)_{i=1}^m$, $(a_j)_{j=1}^l$, $(b_j)_{j=1}^l$ in $\sxx$ we write $\left(p_i,q_i\right)\leq_\pi\left(a_j,b_j\right)$ if
\begin{equation*}
\begin{array}{cr}
\sum\limits_{i=1}^m |f_\fhi(p_i)-f_\fhi(q_i)|^2\leq\sum\limits_{j=1}^l|f_\fhi(a_j)-f_\fhi(b_j)|^2 &\hfill\forall\; \fhi\in\Lxx.
\end{array}
\end{equation*}
Notice that it is enough, by adding zeros if necessary, to take $m=l$.

\begin{theorem}\label{kwapien}
The multilinear operator  $T:\xpx\into Y$ admits a factorization through a Hilbert space if and only if there exists a constant $C>0$ such that
\begin{equation}\label{kwapien1}
\sum\limits_{i=1}^{m}\|T(x_i^1,\dots, x_i^n)-T(z_i^1,\dots, z_i^n)\|^2  \leq  C^2 \sum\limits_{i=1}^{m}\pi(s_i^1\tens\dots\tens s_i^n-t_i^1\tens\dots\tens t_i^n)^2
\end{equation}
for all finite sequences such that $\left(x_i^1\tens\dots\tens x_i^n, z_i^1\tens\dots\tens z_i^n\right)  \leq_\pi  \left(s_i^1\tens\dots\tens s_i^n, t_i^1\tens\dots\tens t_i^n\right)$.
In this case, $\Gamma(T)$ is the best possible constant $C$ as above.
\end{theorem}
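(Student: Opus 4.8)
The plan is to establish the two estimates $C_{\mathrm{best}}\le \Gamma(T)$ and $\Gamma(T)\le C_{\mathrm{best}}$ separately, where $C_{\mathrm{best}}$ denotes the least constant for which \eqref{kwapien1} holds. For the necessity ($C_{\mathrm{best}}\le\Gamma(T)$) I would fix a factorization $T=B\circ A$ through a Hilbert space $H$ and write $w_i=x_i^1\tens\dots\tens x_i^n-z_i^1\tens\dots\tens z_i^n$ and $v_i=s_i^1\tens\dots\tens s_i^n-t_i^1\tens\dots\tens t_i^n$ in $\xtx$, so that the hypothesis reads $\sum_i|\varphi(w_i)|^2\le\sum_i|\varphi(v_i)|^2$ for every $\varphi\in\Lxx=(\xtxp)^*$. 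Since $T(x_i^1,\dots)-T(z_i^1,\dots)=\tilde T(w_i)$ and $\|\tilde T(w_i)\|\le Lip(B)\,\|\tilde A(w_i)\|_H$, it suffices to control $\sum_i\|\tilde A(w_i)\|_H^2$. Choosing a finite orthonormal basis $(e_j)$ of the span of the vectors $\tilde A(w_i),\tilde A(v_i)$ and setting $\varphi_j:=\tilde A^*e_j\in\Lxx$ (with $\|\varphi_j\|\le\|A\|$, by the linearization $\tilde A\colon\xtxp\to H$), expanding in this basis and applying the hypothesis coordinatewise gives $\sum_i\|\tilde A(w_i)\|^2=\sum_j\sum_i|\varphi_j(w_i)|^2\le\sum_j\sum_i|\varphi_j(v_i)|^2=\sum_i\|\tilde A(v_i)\|^2\le\|A\|^2\sum_i\pi(v_i)^2$. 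Thus \eqref{kwapien1} holds with constant $\|A\|\,Lip(B)$, and taking the infimum over factorizations yields $C_{\mathrm{best}}\le\Gamma(T)$.

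For the sufficiency I would invoke Theorem~\ref{maximal}: it is enough to prove that \eqref{kwapien1} with constant $C$ forces $\Gamma(Q_Lf_TI_{\eee})\le C$ for every finite dimensional $E_i\subseteq X_i$ and finite codimensional $L\subseteq Y$, since then $s\le C$ and $\Gamma(T)=s\le C$. Writing $S:=Q_Lf_TI_{\eee}\colon\epe\to Y/L$, a short check shows that $S$ inherits the Kwapie\'n inequality with the \emph{same} constant $C$ and with $\pi$ computed in $\ete$: any test form $\varphi\in\Lxx$ restricts to an element of $\mathcal L(E_1,\dots,E_n)$ agreeing with it on the relevant tensors, so the domination hypothesis transfers to the subspaces; meanwhile $\|Q_L(\cdot)\|\le\|\cdot\|$ handles the left-hand side and the fact that $\pi$ computed in $\ete$ dominates $\pi$ computed in $\xtx$ handles the right-hand side.

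The heart of the matter is then the finite dimensional statement: if a bounded multilinear $S\colon\epe\to G$ between finite dimensional spaces satisfies $\sum_i\|\tilde S(w_i)\|^2\le C^2\sum_i\pi(v_i)^2$ whenever $\sum_i|\psi(w_i)|^2\le\sum_i|\psi(v_i)|^2$ for all $\psi\in\mathcal L(E_1,\dots,E_n)$, with $w_i,v_i$ differences of decomposables, then $\Gamma(S)\le C$. I would construct a Hilbertian seminorm $q$ on $\ete$ which, on the set $D$ of differences of decomposable tensors, is sandwiched between $\tfrac1C\|\tilde S(\cdot)\|_G$ and $\pi(\cdot)$. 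Such a $q$ yields the factorization $f_S=B\circ f_{\mathcal A}$ through the Hilbert space $\ete/\ker q$: the induced multilinear map $\mathcal A$ has $\|\mathcal A\|\le1$ (since $q\le\pi$ on decomposables) and $B$ has $Lip(B)\le C$ (since $\tfrac1C\|\tilde S(\cdot)\|\le q$ on $D$), whence $\Gamma(S)\le C$. To build $q$ I would pass to the finite dimensional space $\mathcal S$ of (Hermitian) quadratic forms on $\ete$, identify $q^2$ with a form $b\succeq0$, and seek $b$ in the compact convex set $\mathcal B=\{\,b\succeq0:\ b(y,y)\le\pi(y)^2\ \forall y\in D\,\}$ meeting $b(x,x)\ge\|\tilde S(x)\|^2/C^2$ for all $x\in D$. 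By compactness it suffices to meet finitely many such constraints; if a finite $F\subseteq D$ failed, a Ky~Fan minimax would produce weights $\theta_x\ge0$ with $\max_{b\in\mathcal B}\sum_{x}\theta_x\,b(x,x)<\tfrac1{C^2}\sum_{x}\theta_x\|\tilde S(x)\|^2$, and a conic-duality re-decomposition of the positive form $\Theta=\sum_{x}\theta_x\,x\tens x$ as $\sum_j v_j\tens v_j$ with $v_j\in D$ and $\sum_j\pi(v_j)^2=\max_{b\in\mathcal B}\sum_x\theta_x\,b(x,x)$. Feeding the sequences $(\sqrt{\theta_x}\,x)_{x\in F}$ and $(v_j)_j$ into the hypothesis would give $\sum_x\theta_x\|\tilde S(x)\|^2\le C^2\sum_j\pi(v_j)^2<\sum_x\theta_x\|\tilde S(x)\|^2$, a contradiction.

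The necessity computation and the descent to finite dimensions are routine. The genuinely delicate point, and the one that separates the multilinear case from Kwapie\'n's linear theorem, is the finite dimensional construction of $q$: the admissible test vectors are confined to the Segre cone of decomposable tensors and its difference set $D$, which is \emph{not} a linear subspace, so one cannot simply diagonalize a quadratic form using eigenvectors. The separation in $\mathcal S$ closes only because the form $\Theta$ to be re-decomposed is itself a combination of $x\tens x$ with $x\in D$, and its optimal re-decomposition can again be taken inside $D$ by conic duality; verifying this strong-duality and attainment step within $D$ is where I expect the main work to lie. Combining the finite dimensional lemma with Theorem~\ref{maximal} gives $\Gamma(T)\le C$ for every admissible $C$, hence $\Gamma(T)\le C_{\mathrm{best}}$, and together with the first paragraph this yields $\Gamma(T)=C_{\mathrm{best}}$.
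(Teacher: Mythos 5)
Your necessity argument and your reduction of sufficiency to Theorem~\ref{maximal} coincide in substance with the paper's: the paper proves the domination \eqref{kwapien6} by exactly your device of expanding along an orthonormal basis of $H$ and applying the hypothesis coordinatewise, and its converse likewise works one finite-dimensional tuple $(\eel)$ at a time before invoking Theorem~\ref{maximal}. Where you genuinely diverge is the finite-dimensional core. The paper never manipulates quadratic forms: it takes $K$ to be the unit sphere of $(\ete,\pi|)^*$, forms the convex cone $S\subset C(K)$ of functions $\phi(\zeta)=\sumim|\zeta(p_i)-\zeta(q_i)|^2-\sumim|\zeta(a_i)-\zeta(b_i)|^2$ attached to sequences that would violate \eqref{kwapien1} strictly, separates $S$ from the open negative cone by Hahn--Banach, and obtains a positive measure $\mu$ on $K$; after normalizing and a homogeneity argument, this yields precisely your sandwiched Hilbertian seminorm, namely $q(u)=\bigl(\int_K|\zeta(u)|^2\,d\mu\bigr)^{1/2}$ with $\|f_T(p)-f_T(q)\|\le q(p-q)$ and $q(a-b)\le C\,\pi|(a-b)$ on differences of decomposables, and $Q_Lf_TI_{\eee}$ then factors through $L_2(\mu)$. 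The Pietsch-style separation buys the seminorm in one step, with no need ever to re-decompose a positive form inside the difference set $D$.

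That re-decomposition is exactly where your proposal, as written, has a gap --- one you partly flagged yourself. Two things fail in the claim ``$\Theta=\sum_j v_j\tens v_j$ with $v_j\in D$ and $\sum_j\pi(v_j)^2=\max_{b\in\mathcal{B}}\sum_x\theta_x\,b(x,x)$''. First, the constraint $b\succeq 0$ dualizes into a positive-semidefinite slack, so conic duality can only produce a domination $\sum_x\theta_x\,x\tens x\preceq\sum_j v_j\tens v_j$, not an equality (the equality-constrained dual would admit non-positive forms $b$ and has a larger value in general); fortunately domination is all you need in order to feed the pair into $\leq_\pi$. Second, attainment would require closedness of the cone generated by $\{(v\tens v,\pi(v)^2)\,:\,v\in D\}$, and for $n\ge 3$ this fails: $D=\seee-\seee=\seee+\seee$ is the set of tensors of rank at most two, which is not closed --- for instance $x\tens x\tens y+x\tens y\tens x+y\tens x\tens x=\lim_{t\to 0}t^{-1}\bigl((x+ty)\tens(x+ty)\tens(x+ty)-x\tens x\tens x\bigr)$ is a limit of elements of $D$ having rank three. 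Both defects are repairable: your minimax inequality is strict, so an $\varepsilon$-optimal dominating family suffices; absence of a duality gap over the closed cone follows from a separation in the finite-dimensional space of forms together with the compactness of $\mathcal{B}$ (which does hold, since $D\supset\seee$ spans $\ete$ and Cauchy--Schwarz then bounds every $b\in\mathcal{B}$); and a closure point can be upgraded to a genuine domination by adding $\delta\sum_l u_l\tens u_l$ for a fixed basis $(u_l)\subset\seee$ of $\ete$, a positive-definite form whose $\pi$-cost is $O(\delta)$. With those patches your quadratic-form route becomes a correct and genuinely different proof of the finite-dimensional step; as submitted, the strong-duality-with-attainment assertion is the missing piece, and the paper's $C(K)$ separation is the shorter way around it.
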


\begin{proof}
First, let us suppose that $T:\xpx\into Y$ admits a factorizations through a Hilbert space $H$, $T=BA$.   If $(p_i,q_i)\leq_\pi(a_i,b_i)$, then it is clear that $(f_A(p_i),f_A(q_i))\leq(f_A(a_i),f_A(b_i))$ in $H$. Given an orthonormal
basis $(e_\alpha)_{\alpha\in I}$ of $H$, we have that $\|h\|^2=\sum\limits_\alpha|\langle h,e_\alpha\rangle|^2$ holds for all $h\in H$. Then
\begin{eqnarray*}
\sum\limits_{i=1}^m\sum\limits_{\alpha\in F}|\lev f_A(p_i)-f_A(q_i) \,,\,  e_{\alpha}  \rev|^2  &=&  \sum\limits_{\alpha\in F}\sum\limits_{i=1}^m|\lev  f_A(p_i)-f_A(q_i)  \,,\,  e_{\alpha} \rev|^2  \\
                                                         &\leq& \sum\limits_{\alpha\in F}\sum\limits_{i=1}^m|\lev  f_A(a_i)-f_A(b_i)  \,,\,  e_{\alpha}\rev|^2
\end{eqnarray*}
for all finite subsets $F$ of $I$. Therefore
\begin{equation}\label{kwapien6}
\sum\limits_{i=1}^m\|f_A(p_i)-f_A(q_i)\|^2\leq \sum\limits_{i=1}^m\|f_A(a_i)-f_A(b_i)\|^2.
\end{equation}
Finally, the combination of \eqref{kwapien6} and the Lipschitz conditions of $B$ and $f_A$ imply
$$\sum\limits_{i=1}^m\|f_T(p_i)-f_T(q_i)\|^2 \leq Lip(B)^2\,  \|A\|^2\,  \sum\limits_{i=1}^m\beta(a_i-b_i)^2.$$
Consequently, (\ref{kwapien1}) must be true and $\inf C\leq Lip(B)\,  \|A\|$. Hence, $\inf C\leq\Gamma(T)$.

Conversely, let us prove that whenever $T$ satisfies  \eqref{kwapien1},  then $T$ admits such a factorization. For this end, we will use Theorem~\ref{maximal}. Let $E_i$ be a finite dimensional subspace of $X_i$. Let us denote by $\pi|$ the restriction of the norm $\pi(\cdot;  \xtx)$ to $\ete$. Set
$$K:=\left\{\; \zeta\in(\ete, \pi|)^*  \;|\; \|\zeta\|=1 \;\right\}.$$
Since the spaces $E_i$ are finite dimensional, $K$ is compact. Define $S$ the subset
of $C(K)$ given by the functions of the form
$$\phi(\zeta)=\sumim|\zeta(p_i)-\zeta(q_i)|^2-\sumim|\zeta(a_i)-\zeta(b_i)|^2,$$
where $(a_i)$, $(b_i)$, $(p_i)$ and $(q_i)$ are finite sequences in $\seee$ such that
$$C^2\,  \sumim \pi|(a_i-b_i)^2 < \sumim \|f_T(p_i)-f_T(q_i)\|^2.$$
Every element $\phi$ in $S$ satisfy $\|\phi\|>0$ since there exist $\zeta$ in $K$ such that $\phi(\zeta)>0$. Moreover, $S$ is a convex cone disjoint of the negative open cone $C_{-}:=\left\{\; \phi \;|\; \sup \phi<0 \;\right\}$. An application of the Hahn-Banach theorem ensures the existence of a measure $\mu$ on $K$ which separates $C_{-}$ and $S$. It is possible to adjust
$\mu$ to be a positive measure such that
\begin{equation}\label{kwapien2}
0\leq\int\limits_{K}\phi(\zeta)\,  d\mu(\zeta)\qquad\forall\,  \phi\in S.
\end{equation}
Since $E_i$ is a finite dimensional space
$$D=\sup\left\{ \left(\int\limits_{K}|\zeta(a)-\zeta(b)|^2 d\mu(\zeta)\right)^\frac{1}{2} \,\Big|\, \pi|(a-b)\leq 1,\, a,b\in\seee  \right\}>0.$$
Thus, we may adjust $\mu$ such that $D=C$.

For every $a,b,p,q\in\seee$ such that $C\;  \pi|(a-b)\leq\|f_T(p)-f_T(q)\|$, \eqref{kwapien2} asserts that
\begin{equation}\label{kwapien3}
\int\limits_{K}|\zeta(a)-\zeta(b)|^2\,  d\mu( \zeta)  \leq\int\limits_{K}|\zeta(p)-\zeta(q)|^2\,  d\mu(\zeta).
\end{equation}
In particular, (\ref{kwapien3}) is also true for $p$ and $q$ in $\seee$ such that $C<\|f_T(p)-f_T(q)\|$ and $a$, $b$ in $\seee$ with $\pi|(a-b)<1$. As a consequence
$$C\leq\left(\int\limits_{K}|\zeta(p)-\zeta(q)|^2\,  d\mu(\zeta)\right)^{\frac{1}{2}}$$
for all $p,q$ in $\seee$ with $C\leq\|f_T(p)-f_T(q)\|$. Take $c=\|f_T(p)-f_T(q)\|$ and $\varepsilon>0$. The homogeneous property of $f_T$ asserts that
$$C<(C+\varepsilon)\frac{c}{c}=\left\|f_T\left(\frac{C+\varepsilon}{c}p\right)-f_T\left(\frac{C+\varepsilon}{c}q\right)\right\|.$$
Hence,
\begin{equation*}
\begin{array}{cr}
\frac{C}{C+\varepsilon}\,  \|f_T(p)-f_T(q)\|\leq\left(\int\limits_{K}|\zeta(p)-\zeta(q)|^2d\mu\right)^\frac{1}{2} & \hfill\forall\; \varepsilon>0.
\end{array}
\end{equation*}
This way,
\begin{equation}\label{kwapien4}
\|f_T(p)-f_T(q)\|\leq\left(\int\limits_{K}|\zeta(p)-\zeta(q)|^2\,  d\mu(\zeta)\right)^\frac{1}{2}\qquad\forall \,  p,q\in\seee.
\end{equation}
On the other hand, it is clear that
\begin{equation}\label{kwapien5}
\left(\int\limits_{K}|\zeta(a)-\zeta(b)|^2\,  d\mu(\zeta)\right)^\frac{1}{2}\leq C\,  \pi|(a-b)\qquad\forall \;  a,b\in\seee.
\end{equation}
Finally, we obtain a factorization as follows
$$\begin{array}{c}
\xymatrix{
\epe\ar[dr]_{A}\ar[rr]^{f_TI_{\eee}}   &                              &  Y\\
                                               &f_A(\seee)\ar[d]\ar[ur]_B  &     \\
                                               &L_2(\mu)   &
}
\end{array},$$
where
\begin{eqnarray*}
A:\epe    &\into &		  L_2(\mu)\\
       \xxp &\mapsto &  A\xxp:\zeta\mapsto \zeta(\xxt)
\end{eqnarray*}
and
\begin{eqnarray*}
B:f_A(\seee)    &\into &		  Y\\
       f_A(\xxt) &\mapsto &  T\xxp.
\end{eqnarray*}
The boundedness of $A$ is deduced from \eqref{kwapien5}; moreover, $\|A\|\leq C$. Inequality \eqref{kwapien4} asserts that $B$ is a well defined Lipschitz function and $Lip(B)\leq1$.

Let $L$ be a finite codimensional subspace of $Y$ and consider $Q_L f I_{\eee}=(Q_LB)A$. Since $\|Q_L\|\leq1$ we obtain that $Q_LfI_{\eee}$ admits a factorization through a Hilbert space and $\Gamma(Q_L f_T I_{\eee})\leq C$. Theorem~\ref{maximal} implies that $T$ belongs to $\Gxxy$ and $\Gamma(T)\leq \inf C$.
\end{proof}


\section{Ideal Behavior and Tensorial Representation}

The ideal features of $\Gamma$ are contained in the next proposition. We omit its proof since it follows easily from the definition,  using the characterization provided by Theorem \ref{kwapien}.

\begin{proposition}\label{idealbehavior}
Let $\xxx$ and $Y$ be Banach spaces. Then:
\begin{itemize}
	\item [i)]  $\Gamma$ is a norm on $\Gamma\left(\xxx; Y\right)$.
	\item [ii)] Every rank-one multilinear operator
	\begin{eqnarray*}
    \fhi\cdot y:\xpx &\into& Y\\
    \xxp &\mapsto& \fhi\xxp\,  y
\end{eqnarray*}
	with $\fhi\in\mathcal{L}(\xxx)$ and $y\in Y$ is an element of $\Gamma\left(\xxx; Y\right)$ and $\Gamma(\fhi\cdot y)\leq \|\fhi\|\,  \|y\|$.
	\item [iii)] $\|T\|\leq \Gamma(T)$ for all $T\in\Gamma\left(\xxx; Y\right)$.
	\item [iv)]  Let $Z_1,\dots,Z_m$, $W$ be Banach spaces. Let $R:Z_1\times\dots\times Z_m\into\xtxp$ be a bounded multilinear operator such that $f_R\left(\Sigma_{Z_1,\dots,Z_m}\right)\subset\sxx$ and let $S:Y\into W$ be a bounded linear operator. Then $Sf_T R:Z_1\times\dots\times Z_m\into W$ is an element of $\Gamma\left(Z_1,\dots, Z_m; W\right)$ whenever $T$ is in $\Gamma\left(\xxx; Y\right)$ and $\Gamma(Sf_TR)\leq \|R\|\,  \Gamma(T)\,  \|S\|$.
\end{itemize}
\end{proposition}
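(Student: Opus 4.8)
The plan is to derive all four items from the Kwapie\'n characterization of Theorem~\ref{kwapien}, proving (iii) first so that it feeds the definiteness in (i). For (iii) I would test \eqref{kwapien1} on the shortest admissible data: fix $\xxp\in\xpx$ and take the one-term sequences $p=a=\xxt$ and $q=b=0$ in $\sxx$. Since $p=a$ and $q=b$, the relation $(p,q)\leq_\pi(a,b)$ holds trivially, and \eqref{kwapien1} with $C=\Gamma(T)$ gives
\[
\|T\xxp\|^2=\|f_T(p)-f_T(q)\|^2\leq \Gamma(T)^2\,\pi(\xxt)^2=\Gamma(T)^2\,(\|x^1\|\cdots\|x^n\|)^2,
\]
where I used that $\pi$ is a crossnorm. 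Taking the supremum over the unit balls yields $\|T\|\leq\Gamma(T)$.

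For (i), nonnegativity is immediate and $\Gamma(T)=0\Rightarrow T=0$ follows at once from (iii). Homogeneity $\Gamma(\lambda T)=|\lambda|\,\Gamma(T)$ is read off the definition, since a factorization $T=B\circ A$ produces $\lambda T=(\lambda B)\circ A$ with $Lip(\lambda B)=|\lambda|\,Lip(B)$ (and symmetrically for $\lambda\neq0$). For the triangle inequality I would again invoke Theorem~\ref{kwapien}: given data with $(p_i,q_i)\leq_\pi(a_i,b_i)$, Minkowski's inequality in $\ell_2$ applied to the $Y$-valued sequences $(T_1(p_i)-T_1(q_i))_i$ and $(T_2(p_i)-T_2(q_i))_i$ yields
\[
\Big(\sumim\|(T_1+T_2)(p_i)-(T_1+T_2)(q_i)\|^2\Big)^{1/2}\leq(\Gamma(T_1)+\Gamma(T_2))\Big(\sumim\pi(a_i-b_i)^2\Big)^{1/2},
\]
so $\Gamma(T_1+T_2)\leq\Gamma(T_1)+\Gamma(T_2)$ by the optimality of the constants. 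For (ii) I would simply factor $\fhi\cdot y$ through the one-dimensional Hilbert space $\kk$, taking $A=\fhi:\xpx\into\kk$ (so $\|A\|=\|\fhi\|$) and the Lipschitz map $B:\kk\into Y$, $B(t)=t\,y$, with $Lip(B)=\|y\|$; then $\fhi\cdot y=B\circ A$ gives $\Gamma(\fhi\cdot y)\leq\|\fhi\|\,\|y\|$.

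For (iv) I would compose factorizations. Start from $T=B\circ A$ through a Hilbert space $H$, equivalently $f_T=B\circ f_A$ as in \eqref{sigmafactorization}. First one must check that $S f_T R$ is genuinely multilinear: although $f_R$ lands in the decomposable tensors $\sxx$, a sum of decomposables need not be decomposable, so linearity in each slot is recovered by passing to the linearization $\tlin$ and using that $f_T=\tlin|_{\sxxp}$ agrees with the \emph{linear} map $\tlin$ on $\sxx$, together with the multilinearity of $R$. Then set $A':=f_A\circ f_R$, the $\so$ of a bounded multilinear map $Z_1\times\dots\times Z_m\into H$, and $B':=S\circ B$. Because $f_A(0)=0$ and $Lip(f_A)=\|A\|$, for decomposable arguments
\[
\|A'(z^1,\dots,z^m)\|\leq\|A\|\,\pi\big(f_R(z^1\tens\dots\tens z^m)\big)\leq\|A\|\,\|R\|\,\|z^1\|\cdots\|z^m\|,
\]
so $\|A'\|\leq\|A\|\,\|R\|$, while $Lip(B')\leq\|S\|\,Lip(B)$. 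The factorization $S f_T R=B'\circ A'$ then gives $\Gamma(S f_T R)\leq\|A'\|\,Lip(B')\leq\|R\|\,\|S\|\,\|A\|\,Lip(B)$, and passing to the infimum over factorizations of $T$ yields $\Gamma(S f_T R)\leq\|R\|\,\Gamma(T)\,\|S\|$.

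I expect the only genuinely delicate point to be the multilinearity of $S f_T R$ in (iv)---the decomposable-versus-sum-of-decomposables subtlety resolved through $\tlin$---together with carefully tracking the two constants across the composition; the remaining items are routine bookkeeping with the characterization of Theorem~\ref{kwapien}.
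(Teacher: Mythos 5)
Your proposal is correct and follows exactly the route the paper indicates: the paper omits the proof of Proposition~\ref{idealbehavior}, stating that it ``follows easily from the definition, using the characterization provided by Theorem~\ref{kwapien},'' and that is precisely how you proceed --- (ii), (iv) and homogeneity by exhibiting explicit factorizations (with the linearization $\tlin$ correctly handling multilinearity of $Sf_TR$), and (iii) plus the triangle inequality by testing the Kwapie\'n inequality \eqref{kwapien1} on suitable sequences and applying Minkowski together with the best-constant statement.
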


A consequence of Corollary \ref{tensorialrepresentation2} is that every $\Gxxy$ is a Banach space. This result in addition to previous proposition and Theorem \ref{maximal} tells us that the pair $[\Gamma, \Gamma(\cdot)]$ is a maximal multi-ideal in the sense of K. Floret and S. Hunfeld \cite{floret_hunfeld01}.

In  \cite{floret_hunfeld01} the authors prove that every maximal ideal is represented by a finitely generated tensor norm, extending, in this way, the Representation Theorem for Maximal Ideals (see \cite[Sec. 17]{defant93}). Consequently, $\Gamma$ is represented by a  finitely generated tensor norm $\gamma$ with which it is  in duality. Now, we give an explicit fomulation of $\gamma$.

For a better understanding, it is convenient to have in mind that the mapping $\xxt\tens y\mapsto (\xxt)\tens y$ defines a linear isomorphism between $\xtxty$ and $\left(\xtx\right)\tens Y$. For example, under this identification if $p=\xxt$ and $q=\zzt$ are elements in $\sxx$ and $y$ is in $Y$, then
$$\xxt\tens y-\zzt\tens y=\pmqy.$$

In order to define $\gamma(u)$, the Lipschitz condition of bounded $\sos$ leads us to consider representations of $u$ of the form
\begin{equation}\label{angulorepresentation}
\sum\limits_{i=1}^m(\pimqi)\tens y_i,
\end{equation}
where $p_i$ and $q_i$ are elements in $\sxx$ and $y_i$ in $Y$.  The first tensor norm which considers representations as in \eqref{angulorepresentation} was given by Angulo in his doctoral dissertation \cite{angulo10}. In that case, J.C. Angulo defined the tensor norm $d_p$ which is in duality with the collection of Lipschitz $p$-summing multilinear operators defined in \cite{angulo17}.

Before presenting the norm $\gamma$, we fix some notation. Given finite sequences $(a_j)_{j=1}^m$ and $(b_j)_{j=1}^m$ in $\sxx$. We write
$$\|(a_j-b_j)\|_2^\pi:=\left(\sum\limits_{j=1}^m \pi(\ajmbj)^2\right)^{\frac{1}{2}}.$$
We also use the standard notation
$$\|(y_i)\|_2=\left(\sum\limits_{i=1}^m\|y_i\|^2\right)^{\frac{1}{2}}$$
for a finite sequence $(y_i)_{i=1}^m$ in Y.

\begin{definition}
Let $\xxx, Y$ be Banach spaces. For $u$ in $\xtxty$ define
$$\gamma(u)=\inf  \|(a_i-b_i)\|_2^\pi\,\|(y_i)\|_2 $$
where the infimum is taken over all representations $u=\sum\limits_{i=1}^m(\pimqi)\tens y_i$ and $(p_i,q_i)\leq_\pi (a_i,b_i)$.
\end{definition}

The  next proposition is straightforward and only requires standard techniques of tensor products. We omit the proof.

\begin{proposition}\label{gbtensornorm}
Let $\xxx, Y$ be Banach spaces and $\beta$ be a reasonable cross norm on the tensor product $\xtx$. Then:
\begin{itemize}
	\item [i)] $\gamma$ is a norm on $\xtxty$.	
	\item [ii)]   $\gamma(\pmqy)\leq\pi(\pmq)\,  \|y\|$ for all $p,q\in\sxx$ and $y\in Y$.
	\item [iii)]  Let $\fhi\in\Lxx$ and $y^*\in Y^*$. The functional
	\begin{eqnarray*}
\fhi\tens y^*:\left(\xtxty,\gamma\right) &\into &		  \kk\\
\xxt\tens y &\mapsto &  f_\fhi(\xxt)\,  y^*(y)
\end{eqnarray*}
is bounded and $\|\fhi\tens y^*\|\leq \|\fhi\|\,  \|y^*\|$.
	\item [iv)] Let $Z_1,\dots, Z_m, W$ be Banach spaces. If $R:Z_1\times\dots\times Z_m\into\xtxp$ is a bounded multilinear operator such that $f_R\left(\Sigma_{Z_1,\dots, Z_m}\right)\subset\sxx$ and  $S:W\into Y$ is a bounded linear operator then
	\begin{eqnarray*}
R\tens S:\left(Z_1\tens\dots\tens Z_m\tens W, \gamma\right) &\into &		  \left(\xtxty,\gamma\right)\\
z_1\tens\dots\tens z_m\tens w &\mapsto &  f_R(z_1\tens\dots\tens z_m)\tens S(w)
\end{eqnarray*}
is bounded and $\|R\tens S\|\leq \|R\|\,  \|S\|$.
	\item [v)] $\gamma(u;\, \xtxty)=\inf   \gamma(u;\,\etetf)$\\
	
	where the infimum is taken over all finite dimensional subspaces $E_i$ and $F$ of $X_i$ and $Y$, respectively, such that $u\in\etetf$.
\end{itemize}
\end{proposition}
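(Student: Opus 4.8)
The plan is to prove the five items largely independently, doing the elementary Cauchy--Schwarz estimates (ii) and (iii) first, deducing (i) from (iii) together with a concatenation argument, handling (iv) by a single compatibility identity, and isolating (v) as the only genuinely delicate point. I would start with (iii), since it also furnishes the separating functionals used in (i). For a representation $u=\sumim(\pimqi)\tens y_i$ with $(p_i,q_i)\leq_\pi(a_i,b_i)$, expand $(\fhi\tens y^*)(u)=\sumim\big(f_\fhi(p_i)-f_\fhi(q_i)\big)\,y^*(y_i)$ and apply Cauchy--Schwarz to bound it by $\big(\sumim|f_\fhi(p_i)-f_\fhi(q_i)|^2\big)^{1/2}\big(\sumim|y^*(y_i)|^2\big)^{1/2}$. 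The second factor is at most $\|y^*\|\,\|(y_i)\|_2$; for the first, the relation $\leq_\pi$ passes the sum to the $(a_i,b_i)$, and then $|f_\fhi(\aimbi)|=|\widetilde{\fhi}(\aimbi)|\leq\|\fhi\|\,\pi(\aimbi)$ (with $\|\widetilde{\fhi}\|=\|\fhi\|$) gives the bound $\|\fhi\|\,\|(\aimbi)\|_2^\pi$. Taking the infimum over representations yields $\|\fhi\tens y^*\|\leq\|\fhi\|\,\|y^*\|$. Item (ii) is then the single-term representation $\pmqy$ together with the trivial domination $(p,q)\leq_\pi(p,q)$.

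For (i), positive homogeneity is clear by scaling the $y_i$. For subadditivity I would use the Hilbertian normalization trick: keeping $u$ fixed, sending $y_i\mapsto ty_i$ while scaling one tensor factor of each $p_i,q_i,a_i,b_i$ by $t^{-1}$ preserves both the representation and $\leq_\pi$ (since $\widetilde{\fhi}$ is linear) and leaves $\|(\aimbi)\|_2^\pi\,\|(y_i)\|_2$ invariant, so a near-optimal representation may be normalized to have $\|(\aimbi)\|_2^\pi=\|(y_i)\|_2$, a common value near $\gamma(u)^{1/2}$. Concatenating normalized near-optimal representations of $u$ and $v$ gives a representation of $u+v$ whose dominating pairs concatenate (because $\leq_\pi$ adds over the two blocks) and whose value is $(\gamma(u)+\gamma(v))^{1/2}(\gamma(u)+\gamma(v))^{1/2}=\gamma(u)+\gamma(v)$. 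Definiteness uses (iii): if $\gamma(u)=0$ then $(\fhi\tens y^*)(u)=0$ for all $\fhi\in\Lxx$, $y^*\in Y^*$; as these functionals are exactly the $\widetilde{\fhi}\tens y^*$ on $(\xtx)\tens Y$ and separate points, $u=0$. Finiteness of $\gamma$ holds because any decomposable $p$ equals $p-0$ with $0\in\sxx$.

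Item (iv) rests on the identity $f_\psi=f_\fhi\circ f_R$ on $\Sigma_{Z_1,\dots,Z_m}$, where $\psi:=\widetilde{\fhi}\circ R\in\mathcal{L}(Z_1,\dots,Z_m)$; this follows on decomposable tensors from the hypothesis $f_R(\Sigma_{Z_1,\dots,Z_m})\subset\sxx$. Given a representation $u=\sum_i(P_i-Q_i)\tens w_i$ with $(P_i,Q_i)\leq_\pi(A_i,B_i)$, map it to $\sum_i\big(f_R(P_i)-f_R(Q_i)\big)\tens S(w_i)$, which is a legitimate $\gamma$-representation of $(R\tens S)(u)$ because $f_R(P_i),f_R(Q_i)\in\sxx$. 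The identity converts $\leq_\pi$ for $\psi$ into $\leq_\pi$ for $\fhi$, so $(f_R(A_i),f_R(B_i))$ dominates $(f_R(P_i),f_R(Q_i))$; then $\pi(f_R(A_i)-f_R(B_i))=\pi(\widetilde{R}(A_i-B_i))\leq\|R\|\,\pi(A_i-B_i)$ together with $\|S(w_i)\|\leq\|S\|\,\|w_i\|$ yields $\|R\tens S\|\leq\|R\|\,\|S\|$ after the infimum.

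The main obstacle is (v), and within it only the bound $\inf_{E_i,F}\gamma(u;\etetf)\leq\gamma(u;\xtxty)$ is subtle. The reverse is immediate: a representation living in the subspaces is also one in the ambient space, $\leq_\pi$ over $\Lxx$ is implied by $\leq_\pi$ over $\mathcal{L}(E_1,\dots,E_n)$ (by restriction of forms), and $\pi(\cdot;\xtx)\leq\pi(\cdot;\ete)$. For the delicate bound two issues arise. First, to push $\leq_\pi$ from $\Lxx$ down to $\mathcal{L}(E_1,\dots,E_n)$ one needs every bounded multilinear form on the finite-dimensional $E_1\times\dots\times E_n$ to extend to one on $\xpx$, which I would obtain by Hahn--Banach extension of a dual basis of each $E_i$ and the corresponding finite expansion of the form. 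Second, and this is the crux, $\pi$ is not injective, so $\pi(\aimbi;\ete)$ can strictly exceed $\pi(\aimbi;\xtx)$. To absorb this, I would start from a near-optimal ambient representation, fix for each difference $a_i-b_i$ a near-optimal $\pi(\cdot;\xtx)$-expansion into decomposable tensors, and then enlarge each $E_i$ to a finite-dimensional subspace containing all factors occurring in $p_i,q_i,a_i,b_i$ and in these expansions, with $F=\mathrm{span}\{y_i\}$. On such subspaces $\pi(\aimbi;\ete)$ approximates $\pi(\aimbi;\xtx)$, so the value computed in $\etetf$ approximates the ambient value; letting the approximation error tend to $0$ closes the gap. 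I expect this reconciliation of the non-injectivity of $\pi$ with the finite-dimensional computation to be the only non-routine part of the proposition.
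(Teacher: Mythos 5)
Your proposal is correct. Note that the paper itself offers no proof of Proposition \ref{gbtensornorm} --- it states only that the result ``is straightforward and only requires standard techniques of tensor products'' --- so there is no argument of the authors' to compare against; your write-up supplies precisely the standard techniques being alluded to, and it does so accurately. In particular, the two points that genuinely require care are handled properly: the triangle inequality in (i) via the normalization-and-concatenation trick, which works because the relation $\leq_\pi$ is compatible with scaling a factor of each decomposable tensor by $t^{-1}$ (as $f_\fhi$ is the restriction of the linear functional $\widetilde{\fhi}$) and is additive over concatenated blocks; and the nontrivial inequality in (v), where you correctly identify that domination must be verified against \emph{all} forms in $\mathcal{L}(E_1,\dots,E_n)$ (obtained by Hahn--Banach extension through a dual basis, the norms of the extensions being irrelevant since $\leq_\pi$ is scale-free) and that the failure of injectivity of $\pi$ is absorbed by enlarging the $E_i$ to contain the factors of near-optimal $\pi(\cdot\,;\xtx)$-expansions of the finitely many differences $\aimbi$. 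The remaining items ((ii), (iii), (iv)) are exactly the routine Cauchy--Schwarz and linearization computations one would expect, including the key identity $f_{\widetilde{\fhi}\circ R}=f_\fhi\circ f_R$ on $\Sigma_{Z_1,\dots,Z_m}$ in (iv), which is where the hypothesis $f_R\left(\Sigma_{Z_1,\dots,Z_m}\right)\subset\sxx$ enters.
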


The previous proposition tells us that $\gamma$ is a finitely generated tensor norm in the sense of Floret and Hunfeld \cite{floret_hunfeld01}.

For a well understanding of the tensorial representation of the class $\Gamma$, we precise the involved algebraic morphism. Every bounded multilinear operator $T:\xpx\into Y^*$ gives rise to a bounded functional
\begin{eqnarray*}
\fhi_T:\xtxtyp &\into & \kk\\
\xxt\tens y &\mapsto &  T\xxp(y).
\end{eqnarray*}
Conversely, every bounded functional $\fhi$ on $\xtxtyp$ defines a bounded multilinear operator
\begin{eqnarray*}
T_\fhi:\xpx &\into & Y^*\\
\xxt &\mapsto &  T_\fhi(\xxt):y\mapsto \fhi(\xxt\tens y).
\end{eqnarray*}
It is not difficult to prove that these assignments are linear isometries and inverse of each other. In other words, we have that
\begin{eqnarray}\label{boundedcase}
\Phi:(\xtxtyp)^*  &\into &       \mathcal{L}\left(\xxx; Y^*\right) \\
               \fhi               &\mapsto &  T_\fhi\nonumber
\end{eqnarray}
is an isometric linear isomorphism. The next theorem establishes that $\Phi$ in \eqref{boundedcase} also is an isometric linear isomorphism if we replace $\pi$ by the norm $\gamma$ and $\mathcal{L}\left(\xxx; Y^*\right)$ by the normed space $\Gxxyd$.

\begin{theorem}\label{tensorialrepresentation}
Let $\xxx, Y$ be Banach spaces. Then
$$\Phi:\left(\xtxty,\gamma\right)^*\into\Gxxyd$$
is an isometric linear isomorphism.
\end{theorem}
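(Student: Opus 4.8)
The plan is to show that the algebraic isomorphism $\Phi$ of \eqref{boundedcase}, which sends $\fhi\mapsto T_\fhi$ with inverse $T\mapsto\fhi_T$, restricts to an isometric bijection between $\left(\xtxty,\gamma\right)^*$ and $\Gxxyd$. Since these two assignments are mutually inverse at the purely algebraic level (one checks directly $\fhi_{T_\fhi}=\fhi$ and $T_{\fhi_T}=T$), it suffices to establish two contractive estimates: (A) if $\fhi\in\left(\xtxty,\gamma\right)^*$ then $T_\fhi\in\Gxxyd$ and $\Gamma(T_\fhi)\le\|\fhi\|$; and (B) if $T\in\Gxxyd$ then $\fhi_T\in\left(\xtxty,\gamma\right)^*$ and $\|\fhi_T\|\le\Gamma(T)$, where $\|\fhi\|$ denotes the norm in $\left(\xtxty,\gamma\right)^*$. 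Indeed, composing along $\fhi\mapsto T_\fhi\mapsto\fhi_{T_\fhi}=\fhi$ and using (A) then (B) forces $\|\fhi\|=\Gamma(T_\fhi)$, so $\Phi$ is isometric; and given $T\in\Gxxyd$, estimate (B) shows $\fhi_T$ is $\gamma$-bounded while $\Phi(\fhi_T)=T_{\fhi_T}=T$, so $\Phi$ is onto. Both estimates will be read off from the Kwapie\'n characterization, Theorem~\ref{kwapien}.

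For the easy direction (B), I fix $T\in\Gxxyd$ and any representation $u=\sum_{i=1}^m(\pimqi)\tens y_i$ with $(p_i,q_i)\leq_\pi(a_i,b_i)$. Since $\fhi_T\bigl((\pimqi)\tens y_i\bigr)=\bigl(f_T(p_i)-f_T(q_i)\bigr)(y_i)$, the Cauchy--Schwarz inequality gives
$$|\fhi_T(u)|\le\Big(\sum_{i=1}^m\|f_T(p_i)-f_T(q_i)\|^2\Big)^{1/2}\Big(\sum_{i=1}^m\|y_i\|^2\Big)^{1/2}.$$
By Theorem~\ref{kwapien}, applied with the best constant $C=\Gamma(T)$, the first factor is at most $\Gamma(T)\,\|(\aimbi)\|_2^\pi$. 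Taking the infimum over all admissible representations yields $|\fhi_T(u)|\le\Gamma(T)\,\gamma(u)$, i.e. $\|\fhi_T\|\le\Gamma(T)$.

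Direction (A) is the heart of the matter and is where I expect the main difficulty. Given $\fhi\in\left(\xtxty,\gamma\right)^*$, I must verify inequality \eqref{kwapien1} for $T_\fhi$ with constant $C=\|\fhi\|$. Fix sequences with $(p_i,q_i)\leq_\pi(a_i,b_i)$ and set $c_i:=\|T_\fhi(p_i)-T_\fhi(q_i)\|_{Y^*}$ (the case where all $c_i$ vanish being trivial). For each $i$ I choose $y_i\in B_Y$ near-optimal for the supremum defining $c_i$, and a unimodular scalar, absorbed into $y_i$, so that $\fhi\bigl((\pimqi)\tens y_i\bigr)\ge(1-\varepsilon)c_i$. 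The delicate point is to recover the \emph{full} $\ell_2$-norm of $(c_i)$ rather than a single term: I introduce weights $\theta_i=c_i/\bigl(\sum_j c_j^2\bigr)^{1/2}$ and form $u=\sum_i(\pimqi)\tens(\theta_i y_i)$. This is an admissible representation of $u$ for $\gamma$ with dominating sequences $(a_i,b_i)$, so $\gamma(u)\le\|(\aimbi)\|_2^\pi\,\bigl(\sum_i\theta_i^2\|y_i\|^2\bigr)^{1/2}\le\|(\aimbi)\|_2^\pi$; on the other hand $\fhi(u)=\sum_i\theta_i\,\fhi\bigl((\pimqi)\tens y_i\bigr)\ge(1-\varepsilon)\bigl(\sum_i c_i^2\bigr)^{1/2}$. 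Combining $|\fhi(u)|\le\|\fhi\|\,\gamma(u)$ with these two bounds and letting $\varepsilon\to0$ gives exactly \eqref{kwapien1} with $C=\|\fhi\|$. Specializing this inequality to a single pair $(p,0)$ dominated by itself already bounds $\|T_\fhi(\xxt)\|$ by $\|\fhi\|\,\|x^1\|\cdots\|x^n\|$, so $T_\fhi$ is genuinely bounded, hence an element of $\mathcal{L}(\xxx;Y^*)$; Theorem~\ref{kwapien} then places $T_\fhi$ in $\Gxxyd$ with $\Gamma(T_\fhi)\le\|\fhi\|$. Together with (B), the isometry and surjectivity of $\Phi$ follow as described above, completing the proof.
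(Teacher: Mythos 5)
Your proposal is correct and follows essentially the same route as the paper: both directions are read off from Theorem \ref{kwapien}, with $\|\fhi_T\|\leq\Gamma(T)$ via Cauchy--Schwarz and $\Gamma(T_\fhi)\leq\|\fhi\|$ by testing $\fhi$ on representations $u=\sum_i(\pimqi)\tens y_i$. Your explicit choice of near-optimal $y_i\in B_Y$ with weights $\theta_i$ is just a hands-on instantiation of the $\ell_2$-sum duality $\left(Y\oplus_2\dots\oplus_2 Y\right)^*=Y^*\oplus_2\dots\oplus_2 Y^*$ that the paper invokes abstractly, so the two arguments coincide in substance.
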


\begin{proof}
We will use the linear isometry
\begin{equation}\label{sum2}
\left(Y \oplus_2\dots\oplus_2 Y\right)^*=Y^*\oplus_2\dots\oplus_2 Y^*.
\end{equation}

Suppose that $T$ factors through a Hilbert space. The combination of \eqref{sum2} and Theorem~\ref{kwapien}, implies that for all $(y_i)_i$ and $(p_i,q_i)\leq_\pi(a_i,b_i)$,
$$\left|\sum\limits_{i=1}^m \lev f_T(p_i)-f_T(q_i) \,,\,  y_i \rev\right|\leq \Gamma(T)\,  \|(\aimbi)\|_2^\pi\,  \|(y_i)\|_2.$$
So, if $u=\rju$ is an element in $\xtxty$ and $(p_i,q_i)\leq_\pi(a_i,b_i)$ then
$$|\fhi_T(u)|\leq \Gamma(T)\,  \|(a_j-b_j)\|_2^\pi\,  \|(y_i)\|_2.$$
In other words $\fhi_T$ is bounded and $\|\fhi_T\|\leq\Gamma(T)$.

Conversely, suppose $\fhi\in \left(\xtxty,\gamma\right)^*$. Let $(p_i,q_i)\leq_\pi(a_i,b_i)$ and $(y_i)_i$. Define $u=\rju$, then
$$\left|\sum\limits_{i=1}^m\lev f_{T_\fhi}(p_i)-f_{T_\fhi}(q_i), y_i\rev\right|=|\fhi(u)|\leq \|\fhi\|\;  \|(\aimbi)\|_2^\pi\;  \|(y_i)\|_2.$$
After taking suprema over $\sumim \|y_i\|^2\leq 1$, \eqref{sum2} implies
$$\left(\sum\limits_{i=1}^m\|f_{T_\fhi}(p_i)-f_{T_\fhi}(q_i)\|^2\right)^{\frac{1}{2}}\leq \|\fhi\|\;  \|(\aimbi)\|_2^\pi.$$
According to Theorem \ref{kwapien}, $T_\fhi:\xpx\into Y^*$ factors through a Hilbert space and $\Gamma(T_\fhi)\leq \|\fhi\|$.
\end{proof}

\begin{corollary}\label{tensorialrepresentation2}
Let $\xxx$ and $Y$ be Banach spaces. Then, there exist an isometric isomorphism between $\Gxxy$ and $\left(\xtxtyd,\gamma\right)^*\bigcap\Lxxy$.
\end{corollary}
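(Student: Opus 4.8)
The plan is to derive the corollary from Theorem \ref{tensorialrepresentation} applied to the dual space $Y^*$, combined with the elementary principle that a multilinear operator into $Y$ factors through a Hilbert space exactly when its canonical extension into $Y^{**}$ does, with the same value of $\Gamma(\cdot)$. First I would apply Theorem \ref{tensorialrepresentation} verbatim with $Y$ replaced by $Y^*$; since the target becomes $(Y^*)^*=Y^{**}$, this gives an isometric linear isomorphism
$$\Phi:\left(\xtxtyd,\gamma\right)^*\into\Gamma(\xxx;Y^{**}).$$
In parallel, formula \eqref{boundedcase} applied to $Y^*$ identifies $\left(\xtxtyd,\pi\right)^*$ isometrically with $\mathcal{L}(\xxx;Y^{**})$ through the very same assignment $\fhi\mapsto T_\fhi$.

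Next I would make the intersection precise. Because $\gamma$ is a finitely generated tensor norm we have $\gamma\leq\pi$ on $\xtxtyd$, so every $\gamma$-bounded functional is $\pi$-bounded, yielding a contractive inclusion $\left(\xtxtyd,\gamma\right)^*\hookrightarrow\left(\xtxtyd,\pi\right)^*$. Under the two realizations of $\Phi$ above, this inclusion is exactly the inclusion $\Gamma(\xxx;Y^{**})\subseteq\mathcal{L}(\xxx;Y^{**})$. The ambient space $\mathcal{L}(\xxx;Y^{**})$ is where the intersection $\left(\xtxtyd,\gamma\right)^*\cap\Lxxy$ is to be read: we regard $\Lxxy$ as a subspace of $\mathcal{L}(\xxx;Y^{**})$ via post-composition with the canonical embedding $K_Y:Y\into Y^{**}$, and a functional $\fhi$ lies in the intersection precisely when the range of $T_\fhi$ is contained in $K_Y(Y)$.

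With this reading the corollary reduces to the isometric identity
$$\Gxxy=\Gamma(\xxx;Y^{**})\cap\mathcal{L}(\xxx;Y),$$
realized by $T\mapsto K_Y\circ T$. For ``$\subseteq$'', if $T=B\circ A$ factors through a Hilbert space then so does $K_Y\circ T=(K_Y\circ B)\circ A$, and $\|K_Y\|=1$ gives $\Gamma(K_YT)\leq\Gamma(T)$, with $K_YT$ having range in $K_Y(Y)$. For the reverse inequality I would invoke Theorem \ref{kwapien}: since $K_Y$ is an isometric embedding, $\|T(x_i^1,\dots,x_i^n)-T(z_i^1,\dots,z_i^n)\|=\|K_YT(x_i^1,\dots,x_i^n)-K_YT(z_i^1,\dots,z_i^n)\|$ for all admissible sequences, so the Kwapie\'n inequality \eqref{kwapien1} holds for $T$ with exactly the same constants as for $K_YT$; hence $T\in\Gxxy$ and, comparing best constants, $\Gamma(T)=\Gamma(K_YT)$. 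Composing this isometry with the isomorphism $\Phi$ of the first step, restricted to the functionals whose associated operator takes values in $K_Y(Y)$, produces the desired isometric isomorphism between $\Gxxy$ and $\left(\xtxtyd,\gamma\right)^*\cap\Lxxy$.

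The step requiring the most care is the second one: pinning down the intersection so that it is genuinely carried out inside $\mathcal{L}(\xxx;Y^{**})$, and checking that under $\Phi$ the condition ``$\fhi\in\Lxxy$'' translates exactly into ``$T_\fhi$ has range in $K_Y(Y)$''. Once that bookkeeping is settled, the transfer of the Kwapie\'n inequality across $K_Y$ is routine, precisely because $K_Y$ is isometric and Theorem \ref{kwapien} identifies $\Gamma(\cdot)$ with the best constant in \eqref{kwapien1}.
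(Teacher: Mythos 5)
Your proposal is correct, but it takes a genuinely different route from the paper's own proof. The paper argues locally: given $T\in\Gxxy$, it uses the finitely generated property of $\gamma$ (Proposition \ref{gbtensornorm}(v)) to place a tensor $u$ in some $\etetf$ with $F=(Y/L)^*$, applies Theorem \ref{tensorialrepresentation} on these finite-dimensional pieces (where $Y/L$ equals its bidual) to get $|\zeta_T(u)|\leq \Gamma(Q_L f_T I_{\eee})(1+\eta)\gamma(u)\leq\Gamma(T)(1+\eta)\gamma(u)$, and, conversely, bounds $\Gamma(Q_L f_{T_\fhi} I_{\eee})=\|\fhi\circ(I_{\eee}\tens Q_L^*)\|\leq\|\fhi\|$ uniformly and concludes through the local characterization, Theorem \ref{maximal}. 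You instead globalize through the bidual: Theorem \ref{tensorialrepresentation} applied to $Y^*$ identifies $\left(\xtxtyd,\gamma\right)^*$ with $\Gamma(\xxx;Y^{**})$, and the corollary reduces to the isometric identity $\Gxxy=\Gamma(\xxx;Y^{**})\cap\mathcal{L}(\xxx;Y)$ via $T\mapsto K_Y T$. Your key observation --- that the Kwapie\'n criterion \eqref{kwapien1} sees the target space only through norms of differences of values, so the best constant, and hence $\Gamma(\cdot)$, is unchanged under the isometric embedding $K_Y$ --- is valid, and is a reusable fact of independent interest: it shows that the multi-ideal $\Gamma$ is injective with respect to isometric embeddings of the target. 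Your reading of the intersection also agrees with the paper's implicit one (its proof says ``we may assume that $T_\fhi$ has range contained in $Y$''), and the verification that $\fhi\in\Lxxy$ corresponds exactly to $T_\fhi(\xpx)\subset K_Y(Y)$ is as you say, since the operator associated to $\zeta_T$ under \eqref{boundedcase} applied to $Y^*$ is $K_Y T$. Two remarks: first, your route is shorter on the surface but not logically more economical, since Theorem \ref{kwapien} is itself proved from Theorem \ref{maximal}, so both arguments ultimately rest on the same local (ultraproduct) machinery, yours using it only through Theorem \ref{kwapien}; second, the domination $\gamma\leq\pi$ underlying your contractive inclusion $\left(\xtxtyd,\gamma\right)^*\hookrightarrow\left(\xtxtyd,\pi\right)^*$ is true but is not a consequence of $\gamma$ being finitely generated --- it follows from Proposition \ref{gbtensornorm}(ii) (taking $q=0\in\sxx$) together with the triangle inequality.
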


\begin{proof}
Let $T$ be an operator in $\Gxxy$. Define
\begin{eqnarray*}
\zeta_T :\xtxtyd &\into &		  \kk\\
\xxt\tens y^* &\mapsto &  y^*(T\xxp).
\end{eqnarray*}
The multilinear feature of $T$ and the linearity of every $y^*$ assert that $\zeta_T$ is well defined and linear. Let $u\in\xtxtyd$ and $\eta>0$. The finitely generated property of $\gamma$ (see Proposition \ref{gbtensornorm}) asserts that there exist $E_i\in\mathcal{F}(X_i)$ and $F\in\mathcal{F}(Y^*)$ such that $u\in\etetf$ and
$$\gamma(u;\eee\tens F)\leq(1+\eta)\,  \gamma(u;\xtxtyd).$$
The subspace $F$ defines $L\in\mathcal{CF}(Y)$ such that $(Y/L)^*=F$ holds isometrically isomorphic via $Q_L^*$. Then, Theorem \ref{tensorialrepresentation} implies that
\begin{equation}\label{link}
((\eee\tens (Y/L)^*), \gamma)^*= \Gamma(\eee;Y/L)
\end{equation}
holds isometrically isomorphic. Notice that in \eqref{link} we are identifying $Y/L$ with its double topological dual. Algebraic manipulations leads us that, under \eqref{link}, $Q_L f_T I_{\eee}$ is the multilinear operators that corresponds to the composition $\fhi_T\circ  (I_{\eee}\tens Q_L^*)$. Furthermore,
\begin{eqnarray*}
|\zeta_T(u)| &= &   |\fhi_T\circ  (I_{\eee}\tens Q_L^*) (u)|\\
                   &\leq& \|\fhi_T\circ  (I_{\eee}\tens Q_L^*):(\eee\tens (Y/L)^*), \gamma)\into \kk\|\,  \gamma(u)\\
                  &\leq & \Gamma(Q_L f_T I_{\eee})\,  (1+\eta)\,  \gamma(u;\xtxtyd)\\
                  &\leq & \Gamma(T)\, (1+\eta)\, \gamma(u;\xtxtyd).
\end{eqnarray*}
The election of $\eta$ allows us to conclude that $\zeta_T$ is bounded and $\|\zeta_T\|\leq \Gamma(T)$.

For the converse, let $\fhi\in\left(\xtxtyd,\gamma\right)^*\bigcap\Lxxy$. We may assume that $T_\fhi$ has range contained in $Y$. Reasoning as before (see \eqref{link}) we have that
$$\Gamma(Q_L f_{T_\fhi} I_{\eee}) =\|\fhi\circ (I_{\eee}\tens Q_L^*)\|\leq\|\fhi\|$$
holds for all $E_i\in\mathcal{F}(X_i)$ and $L\in\mathcal{CF}(Y)$. Hence, Theorem \ref{maximal} asserts that $T\in\Gxxy$ and $\Gamma(T)\leq\|\fhi\|$.

Finally, it is easy to check that the assignments $T\mapsto \zeta_T$ and $\fhi\mapsto T_\fhi$ are linear and inverse of each other.
\end{proof}

\subsection{Preservation of the property of factoring through a Hilbert space}

\subsubsection*{Decreasing the  Degree by Evaluations}

For any bounded multilinear operator $\Txxy$ and any $x^n$ in $X_n$ fixed define
\begin{eqnarray*}
T^{x^n}:X_1\times \dots\times X_{n-1} &\into & Y\\
(x^1,\dots, x^{n-1})&\mapsto &  T\xxp.
\end{eqnarray*}
Plainly, $T^{x^n}$ is a bounded multilinear operator. Analogously, we can define a bounded multilinear operator $T^{x^{n-k+1},\dots,x^n}:X_1\times \dots\times X_{n-k}\into Y$ for $1\leq k< n$ once we fix $x^j$ in $X_j$ for $n-k+1\leq j \leq n$.

\begin{proposition}\label{fixk}
Let $T$ in $\Gxxy$ and $x^j$ in $X_j$ for $n-k+1\leq j\leq n$. Then $T^{x^{n-k+1},\dots,x^n}$ is an element of $\Gamma(X_1,\dots X_{n-k}; Y)$ for all $1\leq k<n$ and $\Gamma(T^{x^{n-k+1},\dots,x^n})\leq \Gamma(T)\|x^{n-k+1}\|\dots\|x^{n}\|.$
\end{proposition}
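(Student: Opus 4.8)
The plan is to realize $T^{x^{n-k+1},\dots,x^n}$ as a composition of the type treated in part (iv) of Proposition \ref{idealbehavior} and then read the desired estimate directly off that statement. Holding the points $x^j\in X_j$ ($n-k+1\le j\le n$) fixed, I would introduce the \emph{insertion operator}
\begin{eqnarray*}
R:X_1\times\dots\times X_{n-k} &\into& \xtxp\\
(x^1,\dots,x^{n-k}) &\mapsto& x^1\tens\dots\tens x^{n-k}\tens x^{n-k+1}\tens\dots\tens x^n
\end{eqnarray*}
and take $S=Id_Y:Y\into Y$, so that $\|S\|=1$. The point is that plugging the fixed entries into the last slots is exactly tensoring with the fixed decomposable factor $x^{n-k+1}\tens\dots\tens x^n$ inside $\xtxp$.

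Next I would verify that $R$ and $S$ meet the hypotheses of Proposition \ref{idealbehavior}(iv). Multilinearity of $R$ in the free variables $x^1,\dots,x^{n-k}$ is immediate from multilinearity of the tensor map. Its associated $\so$ $f_R$ sends the decomposable tensor $x^1\tens\dots\tens x^{n-k}$ to the decomposable tensor $x^1\tens\dots\tens x^n$, so that $f_R(\Sigma_{X_1,\dots,X_{n-k}})\subset\sxx$, which is precisely the compatibility condition required in (iv). Since $\pi$ is a crossnorm, $\pi(x^1\tens\dots\tens x^n)=\|x^1\|\cdots\|x^n\|$; taking the supremum over $\|x^i\|\le1$ for $1\le i\le n-k$ then yields $\|R\|=\|x^{n-k+1}\|\cdots\|x^n\|$.

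Finally I would identify the composition with the operator in question. Evaluating at an arbitrary tuple gives
$$(Sf_TR)(x^1,\dots,x^{n-k})=f_T\!\left(x^1\tens\dots\tens x^n\right)=T(x^1,\dots,x^n)=T^{x^{n-k+1},\dots,x^n}(x^1,\dots,x^{n-k}),$$
so $T^{x^{n-k+1},\dots,x^n}=Sf_TR$. Proposition \ref{idealbehavior}(iv) then delivers at once that $T^{x^{n-k+1},\dots,x^n}$ lies in $\Gamma(X_1,\dots,X_{n-k};Y)$ together with the bound
$$\Gamma(T^{x^{n-k+1},\dots,x^n})\le\|R\|\,\Gamma(T)\,\|S\|=\Gamma(T)\,\|x^{n-k+1}\|\cdots\|x^n\|,$$
which is exactly the assertion. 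I do not expect a genuine obstacle here: all the content is in choosing $R$ and $S$ correctly, and the only computation needing a word is the norm of $R$, which reduces to the crossnorm property of $\pi$; the rest is the ideal behaviour already recorded in Proposition \ref{idealbehavior}.
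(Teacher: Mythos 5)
Your proof is correct and follows exactly the paper's route: the paper's (one-line) proof likewise applies Proposition \ref{idealbehavior}(iv) to the insertion operator $R(x^1,\dots,x^{n-k})=x^1\tens\dots\tens x^n$ with the last $k$ entries fixed. You simply supply the details the paper leaves implicit (the verification that $f_R(\Sigma_{X_1,\dots,X_{n-k}})\subset\sxx$, the norm computation $\|R\|=\|x^{n-k+1}\|\cdots\|x^n\|$ via the crossnorm property of $\pi$, and the identification $Sf_TR=T^{x^{n-k+1},\dots,x^n}$), all of which are accurate.
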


\begin{proof}
It is enough to apply (iv) from Proposition \ref{idealbehavior} to the map $R:X_1\times \dots\times X_{n-k}\into \xtxp$ defined by $R(x^1,\dots, x^{n-k})=\xxp$.
\end{proof}

The case $k=n-1$ produces a bounded linear operator $T_{n-(n-1)}:X_1\into Y$ that factors through a Hilbert space in the linear sense. In this respect, we can say more, let $p=\xxp$ in $\xpx$ fixed and let $T$ in $\Gxxy$. Denote by $T_i:X_i\into Y$ the linear map resulting by fixing all coordinates except the $i$-th (see Proposition \ref{fixk}). Analogous arguments to those done in the proof of Proposition \ref{fixk} allows to conclude that $T_i$ is an element of $\Gamma(X_i;Y)$ and $\Gamma(T_i)\leq \Gamma(T)\prod\limits_{j\neq i}\|x^j\|$ for all $1\leq i\leq n$. Hence
$$\Gamma(T_1)\dots\Gamma(T_n)\leq \Gamma(T)^n \pi(p)^{n-1}.$$

\subsubsection*{Increasing the Degree by Products}

In the following proposition we show how to construct multilinear operators that factors through a Hilbert space for given $n$ linear operators with the same property.

\begin{proposition}
Let $n$ be a positive integer and let $T_i:X_i\into Y_i$ in $\Gamma(X_i; Y_i)$ for $1\leq i\leq n$. Then
\begin{eqnarray*}
\tens \circ (T_1,\dots, T_n):\xpx &\into &Y_1\widehat{\tens}_\pi\dots\widehat{\tens}_\pi Y_n\\
\xxp          &\mapsto & T_1(x^1)\tens\dots\tens T_n(x^n)\\
\end{eqnarray*}
belongs to $\Gamma(\xxx;Y_1\widehat{\tens}_\pi\dots\widehat{\tens}_\pi Y_n)$ and
$$\Gamma(\tens \circ (T_1,\dots, T_n))\leq 2^{n-1}\Gamma(T_1)\dots\Gamma(T_n).$$
\end{proposition}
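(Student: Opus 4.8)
The plan is to reduce everything to two ingredients already in hand: the classical linear Kwapie\'n factorization (the case $n=1$ of Theorem \ref{kwapien}) and the factorization of the canonical multilinear map $\tens$ worked out in the examples. Since each $T_i$ is linear, the case $n=1$ of Theorem \ref{kwapien} coincides with the classical characterization, so each $T_i$ admits a genuinely \emph{linear} factorization $T_i=B_iA_i$ through a Hilbert space $H_i$, with $A_i:X_i\into H_i$ and $B_i:H_i\into Y_i$ bounded linear and $\|A_i\|\,\|B_i\|$ arbitrarily close to $\Gamma(T_i)$. The idea is then to split
$$\tens\circ(T_1,\dots,T_n)=C\circ\widetilde{A},$$
where $\widetilde{A}\xxp:=A_1(x^1)\tens\dots\tens A_n(x^n)$ is regarded as an element of the \emph{Hilbert} tensor product $H_1\widehat{\tens}_2\dots\widehat{\tens}_2 H_n$, and $C$ is the Lipschitz map induced on decomposable tensors by $B_1,\dots,B_n$.

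First I would check that $\widetilde{A}=\tens_2\circ(A_1,\dots,A_n)$ is bounded multilinear into $H_1\widehat{\tens}_2\dots\widehat{\tens}_2 H_n$ with $\|\widetilde{A}\|\leq\|A_1\|\dots\|A_n\|$, since the canonical multilinear map into the Hilbert tensor product has norm one and $\|A_1x^1\tens\dots\tens A_nx^n\|_2=\|A_1x^1\|\dots\|A_nx^n\|$. For the left factor, the tensor product $B_1\widehat{\tens}_\pi\dots\widehat{\tens}_\pi B_n$ extends to a bounded linear operator $H_1\widehat{\tens}_\pi\dots\widehat{\tens}_\pi H_n\into Y_1\widehat{\tens}_\pi\dots\widehat{\tens}_\pi Y_n$ of norm at most $\|B_1\|\dots\|B_n\|$, by the metric mapping property of $\pi$. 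The map $C$ is the restriction of this operator to decomposable tensors, precomposed with the identity $\sig_{H_1\dots H_n}^{\|\cdot\|_2}\into\sig_{H_1\dots H_n}^{\pi}$. Since $\|\cdot\|_2$ is a reasonable crossnorm, \eqref{metrics} gives $Lip(Id:\sig_{H_1\dots H_n}^{\|\cdot\|_2}\into\sig_{H_1\dots H_n}^{\pi})\leq 2^{n-1}$, and therefore $Lip(C)\leq 2^{n-1}\|B_1\|\dots\|B_n\|$, exactly as in the canonical-map example.

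With these two estimates the factorization $\tens\circ(T_1,\dots,T_n)=C\circ\widetilde{A}$ runs through the Hilbert space $H_1\widehat{\tens}_2\dots\widehat{\tens}_2 H_n$, so
$$\Gamma(\tens\circ(T_1,\dots,T_n))\leq\|\widetilde{A}\|\,Lip(C)\leq 2^{n-1}\,\|A_1\|\,\|B_1\|\dots\|A_n\|\,\|B_n\|.$$
Taking the infimum over all linear factorizations of the $T_i$ yields the claimed bound $2^{n-1}\Gamma(T_1)\dots\Gamma(T_n)$. Alternatively, once $\widetilde{A}$ is seen to lie in $\Gamma(\xxx;H_1\widehat{\tens}_\pi\dots\widehat{\tens}_\pi H_n)$ with $\Gamma(\widetilde{A})\leq 2^{n-1}\|A_1\|\dots\|A_n\|$ (by the same reasoning as the canonical map), one may instead compose on the left with the bounded linear operator $B_1\widehat{\tens}_\pi\dots\widehat{\tens}_\pi B_n$ and invoke the ideal property in Proposition \ref{idealbehavior}(iv).

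The step I expect to require the most care is the very first one: legitimizing the passage to \emph{linear} factorizations $T_i=B_iA_i$. The definition of $\Gamma$ only furnishes a bounded multilinear $A$ together with a \emph{Lipschitz} $B$ defined on a subset, and a tensor product of Lipschitz maps has no meaning inside $Y_1\widehat{\tens}_\pi\dots\widehat{\tens}_\pi Y_n$. The key observation is that for a linear operator ($n=1$) the subset $M$ may be taken to be all of $H$ and $B$ to be linear, so $\Gamma(T_i)$ agrees with the classical factorization constant and honest linear factors are available; this is precisely the linear Kwapie\'n characterization recalled after Theorem \ref{kwapien}.
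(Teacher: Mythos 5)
Your proof is correct and takes essentially the same approach as the paper: the paper chooses linear factorizations $T_i=B_iA_i$ and applies Proposition \ref{idealbehavior}(iv) with $R\xxp=A_1(x^1)\tens\dots\tens A_n(x^n)$, $S=B_1\tens\dots\tens B_n$ and the canonical map $\tens:H_1\times\dots\times H_n\into H_1\widehat{\tens}_\pi\dots\widehat{\tens}_\pi H_n$ (whose $\Gamma$-norm is at most $2^{n-1}$ via \eqref{metrics}), which is exactly the alternative you sketch, and your primary argument merely unwinds this into the explicit factorization through $H_1\widehat{\tens}_2\dots\widehat{\tens}_2 H_n$ with the same estimates. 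Your closing discussion of why the $B_i$ may be taken \emph{linear} (via the $n=1$ case of Theorem \ref{kwapien} and the classical Kwapie\'n characterization) is sound and in fact addresses a point the paper's proof leaves implicit.
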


\begin{proof}
Let $T_i=B_iA_i$ be a factorization through the Hilbert space $H_i$ for $1\leq i\leq n$. Since $A_i$  and $B_i$ are bounded for all $i$ we have that $A_1\tens\dots\tens A_n:\xtxp\into H_1\widehat{\tens}_\pi\dots\widehat{\tens}_\pi H_n$ and $B_1\tens\dots\tens B_n:H_1\widehat{\tens}_\pi\dots\widehat{\tens}_\pi H_n\into Y_1\widehat{\tens}_\pi\dots\widehat{\tens}_\pi Y_n$ are bounded. Applying (iv) of Proposition \ref{idealbehavior} to $R:\xpx\into H_1\widehat{\tens}_\pi\dots\widehat{\tens}_\pi H_n$, defined by $R\xxp=A_1(x^1)\tens\dots\tens A_n(x^n)$, $T=\tens:H_1\times\dots\times H_n\into H_1\widehat{\tens}_\pi\dots\widehat{\tens}_\pi H_n $ and $S=B_1\tens\dots\tens B_n$ we have that $(B_1\tens\dots\tens B_n) f_\tens R: \xpx\into Y_1\widehat{\tens}_\pi\dots\widehat{\tens}_\pi Y_n$ is in $\Gamma(\xxx;Y_1\widehat{\tens}_\pi\dots\widehat{\tens}_\pi Y_n)$ and $\Gamma((B_1\tens\dots\tens B_n) f_\tens R)\leq 2^{n-1}\prod\limits_{i=1}^n\|A_i\|\|B_i\|$. Hence $\Gamma((B_1\tens\dots\tens B_n) f_\tens R)\leq 2^{n-1}\Gamma(T_1)\dots\Gamma(T_n)$. We are done since $\tens \circ (T_1,\dots, T_n)=(B_1\tens\dots\tens B_n) f_\tens R$.
\end{proof}


\section{Polynomials Factoring through a Hilbert Space}
Homogenous polynomials that factorize through a Hilbert space can   also be  characterized    in terms of their behaviour in some special finite sequences of points.  In this case we only state the main results. Their   proofs are  analogous to those of  Theorems \ref{maximal} and \ref{kwapien}.

Recall that a  mapping $P:X\rightarrow Y$ between Banach spaces is a {\sl homogeneous poylnomial of degree $n$}
if there exists a multilinear mapping $T_P: X\times\ldots\times X \rightarrow Y$ such that $P(x)=T_P(x,\stackrel{n}{\ldots},x)$.

\begin{definition}\label{polynomial}
A $n$-homogeneous polynomial $P:X\into Y$ factors through a Hilbert space if there exist a Hilbert space $H$, a bounded $n$-homogeneous polynomial $q:X\into H$ and a Lipschitz function $B:q(X)\into Y$such that $p=Bq$. We define $\Gamma(q)=\inf \|q\|Lip(B)$.
\end{definition}

It is clear that every $T$ in $\Gamma(X\times\dots\times X\into Y)$ defines an n-homogeneous polynomial $p:X\into Y$ that factors through a Hilbert space and $\Gamma(p)\leq \Gamma(T)$. Also,  a composition of the form $SpR$ factors through a Hilbert space if $p$ does and $R$ and $S$ are bounded linear operators; moreover, $\Gamma(RpS)\leq \|R\|\Gamma(p)\|S\|$.

\begin{theorem}\label{maximalpol}
The $n$-homogeneous polynomial $p:X\into Y$ admits a factorization through a Hilbert space if and only if
$$s:=\sup \{\;  \Gamma(Q_L p I_{E}) \;|\; E\in\mathcal{F}(X), L\in\mathcal{CF}(Y) \;\} <\infty.$$
In this situation $\Gamma(p)=s$.
\end{theorem}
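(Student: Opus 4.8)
The plan is to follow, almost verbatim, the architecture of the proof of Theorem~\ref{maximal}, replacing the multilinear operator $T$ by the $n$-homogeneous polynomial $p$, the bounded multilinear map $A$ by a bounded $n$-homogeneous polynomial $q$, and using the inclusion $I_E\colon E\hookrightarrow X$ of a finite dimensional subspace in place of $I_{\eee}$; here $\Gamma(\cdot)$ is the polynomial factorization constant of Definition~\ref{polynomial}. The easy direction is direct: if $p=Bq$ factors through a Hilbert space $H$, with $q\colon X\into H$ an $n$-homogeneous polynomial and $B\colon q(X)\into Y$ Lipschitz, then for any $E\in\mathcal{F}(X)$ and $L\in\mathcal{CF}(Y)$ one has $Q_L\,p\,I_E=(Q_LB)(q\,I_E)$, where $q\,I_E=q|_E\colon E\into H$ is an $n$-homogeneous polynomial with $\|q\,I_E\|\leq\|q\|$ and $Q_LB$ is Lipschitz with $Lip(Q_LB)\leq Lip(B)$ since $\|Q_L\|\leq1$. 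Hence $\Gamma(Q_L\,p\,I_E)\leq\|q\|\,Lip(B)$, and taking the infimum over factorizations gives $\Gamma(Q_L\,p\,I_E)\leq\Gamma(p)$, so that $s\leq\Gamma(p)<\infty$.

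For the converse I would set $\mathcal{P}=\mathcal{F}(X)\times\mathcal{CF}(Y)$, order it by $(E,L)\leq(M,N)$ iff $E\subset M$ and $N\subset L$, and fix an ultrafilter $\mathfrak{A}$ on $\mathcal{P}$ refining the filter of up-sets $(E,L)^\#$. For each $(E,L)$ the hypothesis $s<\infty$ yields a factorization $Q_L\,p\,I_E=B^{E,L}q^{E,L}$ through some $\ell_2^{n(E,L)}$ with $\|q^{E,L}\|\leq1$ and $Lip(B^{E,L})\leq s$ (finite dimensionality lets the target be $\ell_2^{n(E,L)}$). I would extend $q^{E,L}$ by zero to $q_{E,L}\colon X\into\ell_2^{n(E,L)}$ and set $q(x)=(q_{E,L}(x))_{\mathfrak{A}}\in(\ell_2^{n(E,L)})_{\mathfrak{A}}$; the estimate $\|q(x)\|_{\mathfrak{A}}=\lim_{\mathfrak{A}}\|q_{E,L}(x)\|\leq\|x\|^n$ gives $\|q\|\leq1$, exactly as for $A$ in Theorem~\ref{maximal}. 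For the map out of the Hilbert space I would first record the polynomial analogue of Lemma~\ref{sigmaimages} (functions $\psi$ on $q^{E,L}(E)$ with $\psi q^{E,L}$ polynomial and $\psi$ Lipschitz form a Banach space, with a bounded adjoint), then extend the adjoint $(B^{E,L})^*\colon (Y/L)^*\into (q^{E,L}(E))^*$ by $Q_L^*$ and zero to $\overline{(B^{E,L})^*}\colon Y^*\into(q^{E,L}(E))^*$, and define $B\colon q(X)\into Y^{**}$ by $Bq(x)\colon y^*\mapsto\lim_{\mathfrak{A}}\lev\overline{(B^{E,L})^*}(y^*),q_{E,L}(x)\rev$. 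The single inequality
\[
\bigl|\lev\overline{(B^{E,L})^*}(y^*),\,q_{E,L}(x)\rev-\lev\overline{(B^{E,L})^*}(y^*),\,q_{E,L}(w)\rev\bigr|\leq s\,\|y^*\|\,\|q_{E,L}(x)-q_{E,L}(w)\|
\]
then yields, just as for \eqref{filter}, that $B$ is well defined, independent of the representative, and Lipschitz with $Lip(B)\leq s$. Choosing, for given $x$ and $y^*$, an index with $x\in E$ and $y^*\in Q_L^*((Y/L)^*)$ forces $Bq=K_Y\,p$; since an ultraproduct of the $L_2$-spaces $\ell_2^{n(E,L)}$ is again an abstract $L_2$-space, hence (order) linearly isometric to some $L_2(\mu)$ by \cite[Vol. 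II Th. 1.b.2]{lindenstrauss_tzafriri96}, I conclude that $p=K_Y^{-1}B\,q$ factors through a Hilbert space with $\Gamma(p)\leq s$. Combined with the easy direction this gives $\Gamma(p)=s$.

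The hard part, and the only place where the polynomial case genuinely departs from Theorem~\ref{maximal}, will be checking that the ultraproduct map $q$ is an honest $n$-homogeneous polynomial and not merely a bounded homogeneous map, since extension by zero does not preserve polynomiality on $X$. The argument I would give is that being an $n$-homogeneous polynomial is a finite dimensional, hence $\mathfrak{A}$-stable, property: $q$ is homogeneous of degree $n$ because each $q^{E,L}$ is, and its polarization is multilinear because, for any fixed vectors, the up-set of indices $(E,L)$ whose $E$ contains those vectors together with all the sign-combinations appearing in the polarization formula belongs to $\mathfrak{A}$, and on that set the polarization of $q$ coincides with the ultraproduct of the multilinear polarizations of the $q^{E,L}$. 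The auxiliary polynomial version of Lemma~\ref{sigmaimages} follows from the same standard Banach-space arguments, and the remaining estimates are formally identical to those in the proof of Theorem~\ref{maximal}.
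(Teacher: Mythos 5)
Your proposal is correct and is exactly the argument the paper intends: for Theorem~\ref{maximalpol} the paper gives no separate proof but states that it is analogous to that of Theorem~\ref{maximal}, and you carry out precisely that ultraproduct scheme (easy direction by restriction and quotient, converse via the extended adjoints $\overline{(B^{E,L})^*}$, the single Lipschitz inequality playing the role of \eqref{filter}, and the identification of the ultraproduct with $L_2(\mu)$). Your treatment of the one genuinely new point --- that the ultraproduct map $q$ is an honest $n$-homogeneous polynomial, verified via the polarization formula on the up-sets $(E,L)^{\#}\in\mathfrak{A}$ where $E$ contains the finitely many vectors involved --- is sound and fills in a detail the paper leaves implicit.
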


If we denote by $\pi_{n,s}$ the symmetric projective tensor norm on the symmetric tensor product $\tens^{n,s} X$ and $\tens^n x:=x\tens\dots\tens x$ we have:

\begin{theorem}\label{kwapienpol}
The $n$-homogeneous polynomial $p:X\into Y$ admits a factorization through a Hilbert space if and only if there exists a constant $C>0$ such that
\begin{equation*}
\sum\limits_{i=1}^{m}\|p(x_i)-p(z_i)\|^2  \leq  C^2 \sum\limits_{i=1}^{m}\pi_{n,s}(\tens^n s_i-\tens^n t_i)^2
\end{equation*}
for all finite sequences $(x_i)$, $(y_i)$, $(s_i)$ and $(t_i)$ in $X$ such that
$$\sumim |\fhi(x_i)-\fhi(z_i)|^2\leq \sumim |\fhi(s_i)-\fhi(t_i)|^2$$
for all $n$-homogeneous polynomial $\fhi:X\into \kk$. In this case, $\Gamma(p)$ is the best possible constant $C$ as above.
\end{theorem}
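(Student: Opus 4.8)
The plan is to transcribe the proof of Theorem~\ref{kwapien} into the symmetric setting, replacing the projective tensor product $\xtxp$ and its decomposable tensors $\sxx$ by the symmetric projective tensor product $\widehat{\tens}^{n,s}_{\pi_{n,s}}X$ and the set $\Sigma^{n,s}_X:=\{\tens^n x:x\in X\}$. First I would set up the polynomial analogue of the $\so$ from Subsection~\ref{subs: notation}: by the universal property of the symmetric projective tensor product, a bounded $n$-homogeneous polynomial $p:X\into Y$ linearizes to a bounded linear operator $\widetilde{p}:\widehat{\tens}^{n,s}_{\pi_{n,s}}X\into Y$ with $p(x)=\widetilde{p}(\tens^n x)$ and $\|\widetilde{p}\|=\|p\|$, and the restriction $f_p:=\widetilde{p}|_{\Sigma^{n,s}_X}$ is Lipschitz with $Lip(f_p)\leq\|p\|$. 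Scalar $n$-homogeneous polynomials $\fhi:X\into\kk$ correspond exactly to the functionals $\zeta\in(\tens^{n,s}X)^*$ through $\fhi(x)=\zeta(\tens^n x)$, so the hypothesis of the theorem is precisely the symmetric analogue $(\tens^n x_i,\tens^n z_i)\leq_{\pi_{n,s}}(\tens^n s_i,\tens^n t_i)$ of the order $\leq_\pi$.

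For necessity I would argue as in the first half of the proof of Theorem~\ref{kwapien}. Given a factorization $p=Bq$ through a Hilbert space $H$, with $q$ an $n$-homogeneous polynomial and $B$ Lipschitz, fix an orthonormal basis $(e_\alpha)$ of $H$. The key observation is that each coordinate map $x\mapsto\langle q(x),e_\alpha\rangle$ is a scalar $n$-homogeneous polynomial $\fhi_\alpha$, so that $\langle f_q(\tens^n x)-f_q(\tens^n z),e_\alpha\rangle=\fhi_\alpha(x)-\fhi_\alpha(z)$. Applying the hypothesis to each $\fhi_\alpha$, summing over finite sets of indices and using $\|h\|^2=\sum_\alpha|\langle h,e_\alpha\rangle|^2$ gives, exactly as in \eqref{kwapien6},
$$\sum_i\|f_q(\tens^n x_i)-f_q(\tens^n z_i)\|^2\leq\sum_i\|f_q(\tens^n s_i)-f_q(\tens^n t_i)\|^2.$$
Combining this with the Lipschitz estimates $\|p(x_i)-p(z_i)\|\leq Lip(B)\,\|f_q(\tens^n x_i)-f_q(\tens^n z_i)\|$ and $\|f_q(\tens^n s_i)-f_q(\tens^n t_i)\|\leq\|q\|\,\pi_{n,s}(\tens^n s_i-\tens^n t_i)$ yields the desired inequality with $C\leq Lip(B)\,\|q\|$; taking the infimum over factorizations gives that the best constant is $\leq\Gamma(p)$.

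For sufficiency I would invoke Theorem~\ref{maximalpol} to reduce to the finite-dimensional local problem and then repeat the Hahn--Banach separation of the second half of the proof of Theorem~\ref{kwapien}. Fix $E\in\mathcal{F}(X)$, write $\pi_{n,s}|$ for the restriction of $\pi_{n,s}$ to $\tens^{n,s}E$, and set $K=\{\zeta\in(\tens^{n,s}E,\pi_{n,s}|)^*:\|\zeta\|=1\}$, which is compact since $E$ is finite dimensional. I would form the convex cone $S\subset C(K)$ of functions $\phi(\zeta)=\sum_i|\zeta(\tens^n x_i)-\zeta(\tens^n z_i)|^2-\sum_i|\zeta(\tens^n s_i)-\zeta(\tens^n t_i)|^2$ arising from sequences in $E$ with $C^2\sum_i\pi_{n,s}|(\tens^n s_i-\tens^n t_i)^2<\sum_i\|f_p(\tens^n x_i)-f_p(\tens^n z_i)\|^2$. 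Here the hypothesis enters: if some $\phi\in S$ were $\leq0$ on all of $K$, then $(\tens^n x_i,\tens^n z_i)\leq_{\pi_{n,s}}(\tens^n s_i,\tens^n t_i)$ would hold and the theorem's inequality would contradict the defining strict inequality, so every $\phi\in S$ has positive supremum and $S$ is disjoint from the negative cone. Separating $S$ from that cone produces a positive measure $\mu$ on $K$ satisfying the analogue of \eqref{kwapien2}, normalized so that the constant $D$ equals $C$. This yields, for all $u,v\in\Sigma^{n,s}_E$, the two inequalities $\|f_p(u)-f_p(v)\|\leq\left(\int_K|\zeta(u)-\zeta(v)|^2\,d\mu\right)^{1/2}\leq C\,\pi_{n,s}|(u-v)$, which factor $Q_L\,f_p\,I_E$ through $L_2(\mu)$ with domain operator of norm $\leq C$ and Lipschitz factor of constant $\leq1$. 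Hence $\Gamma(Q_L\,p\,I_E)\leq C$ for every $E\in\mathcal{F}(X)$ and $L\in\mathcal{CF}(Y)$, and Theorem~\ref{maximalpol} gives $\Gamma(p)\leq C$.

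I expect the only genuinely new work to lie in the first paragraph, namely verifying that the symmetric linearization $f_p$ is a well-defined Lipschitz $\so$ (the symmetric counterpart of \cite[Theorem~3.2]{fernandez-unzueta17a}) and that the scalar coordinate functions of a polynomial $q:X\into H$ are bona fide $n$-homogeneous polynomials, so that the hypothesis may be applied to them. Once this dictionary between $n$-homogeneous polynomials and the metric space $(\Sigma^{n,s}_X,\pi_{n,s})$ is in place, both implications are line-by-line translations of the multilinear arguments, with $\pi$, $\sxx$ and $\Lxx$ replaced throughout by $\pi_{n,s}$, $\Sigma^{n,s}_X$ and the space of scalar $n$-homogeneous polynomials on $X$.
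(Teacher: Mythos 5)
Your proposal is correct and is exactly the argument the paper intends: the paper gives no separate proof of Theorem~\ref{kwapienpol}, stating only that it is ``analogous to those of Theorems~\ref{maximal} and \ref{kwapien},'' and your transcription --- symmetric linearization $f_p$ on $(\Sigma^{n,s}_X,\pi_{n,s})$, the orthonormal-basis argument for necessity, and the Hahn--Banach separation plus Theorem~\ref{maximalpol} for sufficiency --- is precisely that translation, with the dictionary $\pi\mapsto\pi_{n,s}$, $\sxx\mapsto\Sigma^{n,s}_X$, $\Lxx\mapsto$ scalar $n$-homogeneous polynomials carried out faithfully.
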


\bibliographystyle{amsplain}

\end{document}